\numberwithin{equation}{section}
\newcommand{\R}{{\mathbb{R}}}
\newcommand{\Z}{{\mathbb{Z}}}
\newcommand{\T}{{\mathbb{T}^N}}
\newcommand{\beq}{\begin{equation}}
\newcommand{\eeq}{\end{equation}}
\def\into{\rightarrow}
\newtheorem{Theorem}{Theorem}[section]
\newtheorem{Lemma}[Theorem]{Lemma}
\newtheorem{Proposition}[Theorem]{Proposition}
\newtheorem{Remark}[Theorem]{Remark}
\newcommand\keywordsname{Key words}
\newcommand\AMSname{AMS subject classifications}
\newcommand\nd{\noindent}
\newcommand\eps{\varepsilon}
\newcommand\f{\varphi}
\begin{document}

\author{
Radu Ignat
\setcounter{footnote}{1}
\footnote{Institut de Math\'ematiques de Toulouse \& Institut Universitaire de France, UMR 5219, Universit\'e de Toulouse, CNRS, UPS
IMT, F-31062 Toulouse Cedex 9, France. Email: Radu.Ignat@math.univ-toulouse.fr} 
\and Hamdi
Zorgati
\setcounter{footnote}{2}
\footnote{D\'epartement de Math\'ematiques, Facult\'e des Sciences de Tunis, Universit\'e Tunis El Manar 2092, Tunisia. 
Email: hamdi.zorgati@fst.rnu.tn}}

\title{Dimension reduction and optimality of the uniform state in a Phase-Field-Crystal model involving a higher order functional}

\maketitle
\begin{abstract}
We study a Phase-Field-Crystal model described by a free energy functional involving second order derivatives of the order parameter in a periodic setting and under a fixed mass constraint. We prove a $\Gamma$-convergence result in an asymptotic thin-film regime leading to a reduced $2$-dimensional model.
For the   reduced model, we prove necessary and sufficient conditions for the global minimality of the uniform state. We also prove similar results for the Ohta-Kawasaki model. 

\medskip
\noindent {\bf Keywords: }$\Gamma$-convergence, global minimality, fixed mass constraint, Phase-Field-Crystal, Ohta-Kawasaki.

\medskip

\noindent {\bf MSC}: {35J30, 49S05}.
\end{abstract}

\section{Introduction and main results}

Recently Phase-Field-Crystal (PFC) models were introduced in order to study crystallization phenomena and to describe the pattern formation at microscopic scales.
These models succeed to capture the competition of attractive and repulsive interactions between some modulated phases inducing inhomogeneities and domain formation.
(We refer to the review paper of Emmerich et al. \cite{ELWGTTG} for more details.) 

In this paper, we consider a $3$-dimensional model inspired by the one derived by Elder et al. \cite{EKHG,EPBSG}. More precisely, this PFC model is
described by a free energy functional 
for the order parameter corresponding to the local mass density (or the number density of particles)
 which is a variant of the Swift-Hohenberg energy \cite{SH} (introduced to study Rayleigh-B\'enard convection).
This functional involves a double-well potential energy and a regularization term with higher order derivatives:
a gradient term favoring changes in the number density and a second order term controlling such changes. As periodic states are expected to nucleate in the regime of thin film domains
(which allow for elastic and plastic deformations), the order parameter is considered here as periodic in the in-plane variables together with a null-flux condition in the vertical direction.
Several works and numerical simulations illustrated the efficiency of this model to study crystallization and other phenomena, such as crystal growth \cite{EKHG}, 
homogeneous nucleation  \cite{BV}, heterogeneous nucleation, grain growth and
crack propagation for ductile material  \cite{EG}. 

An important research direction concerns the study of (global) minimizers of this energy functional according to several parameters of the system. It is expected that the minimizers are
trivially constant in some parameter regime, and they exhibit stripes or hexagonal type structures in other regimes.
Finding analytically the exact curves separating such parameter regions is a big challenge and remains still open, despite some rigorous attempts such as the paper \cite{SCN} where 
some bounds on the order-disorder phase transition were obtained by a numerical algorithm.
An attempt was also conducted in this direction for a similar type energy,
that is the Ohta-Kawasaki problem \cite{VW} using numerics that take into account the impact of domain size optimization. In \cite{BPBA}, the authors studied the existence of bifurcation branches from the trivial solution with a constraint on the Hamiltonian in the one dimensional case.
We mention the work \cite{DSSS} on the Swift-Hohenberg equation where the authors studied stability 
of the hexagonal patterns and transitions to different solutions like stable or unstable rolls.
Also, we refer to \cite{PT} where the authors studied an extended Fisher-Kolmogorov equation, finding conditions on a fourth order
term that permits existence of different type of one-dimensional periodic solutions exhibiting a countable number of kinks.
We discuss in Remark \ref{rem:sym} the question of finding the parameter region where the minimizers exhibit stripes, in particular, when they are one-dimensional symmetric.

The aim of this paper is to determine the exact parameter curve for the phase transition between the uniform state and the non-trivial states  for the PFC model, by providing a necessary and
sufficient condition for global minimality of the constant state. We also discuss the case of the Ohta-Kawasaki model.

\bigskip

\nd {\bf Model}. The $3$-dimensional domain considered here is periodic in the in-plane coordinates $y'=(y_1, y_2)$ with the period $L>0$ and of thickness $T>0$ in the vertical coordinate $y_3$;
the prototype of the cell is denoted by
$$
y=(y',y_3)\in {\cal D}=[0, L)^2 \times (0,T),
$$
where $[0,L)^2$ stands for the $2$-dimensional torus of length $L$.
For the scalar order-parameter $\Phi\in H^2({\cal D})$ that is $L$-periodic in the in-plane variables and corresponding to the local mass density, the following free-energy functional is defined
\beq
\label{functio}
{\cal F}(\Phi)=\int_{\cal D}  \bigg(\frac{1}{2}(\alpha \Phi+ \Delta \Phi)^2 + W(\Phi) \bigg)\, dy
\eeq
where $\alpha\in \R$ is a fixed constant and $W:\R\into \R$ is a continuous potential. Note that the above quantity ${\cal F}(\Phi)$ is finite for 
$\Phi\in H^2({\cal D})$ since $\Phi$ is bounded in $\Omega$ by Sobolev embedding (for more details about the well-posedness and coercivity of the functional $\cal F$,
see Lemma \ref{lem:coercive} and Remark \ref{rem:coercive} below).

In the classical PFC model \cite{BV,EG,EKHG,EPBSG,ELWGTTG}, $\alpha$ is some positive constant (usually considered in the numerical simulations equal to 1) and $W(\Phi) = \frac{1}{4}(\Phi^2-a)^2 $
is a double-well potential favoring the two phases $\pm \sqrt a$ for some constant $a>0$. 
The difficulty in treating this model in the case of positive $\alpha$ resides in the possibility of losing the coercivity of the functional $\cal F$ (as we point out in  Remark \ref{rem:coercive} below).
When $\alpha$ is nonpositive, we recover the extended Fisher-Kolmogorov type model \cite{DW} introduced for the study of some bistable physical systems,
which is a higher order generalization of the well known Allen-Cahn model for phase-transitions. Therefore, the case $\alpha\leq 0$ is somehow easier to treat due to this coercivity issue. 
\medskip

\nd {\bf Boundary conditions}. 
The number density of particles $\Phi:{\cal D} \rightarrow \R$ is supposed to be $L$-periodic in the in-plane variable $y'$, i.e., 
$$\Phi(y_1+L, y_2, y_3)= \Phi(y_1, y_2+L, y_3)= \Phi(y) \,  \textrm{ for every } \, y'\in \R^2, \, y_3\in (0,T).$$ 
On the top and bottom surfaces, a null-flux condition is imposed
$$
\partial_3 \Phi = 0 \textrm{ in } [0,L)^2 \times \{0, T\},
$$
where $\partial_3$ is the partial derivative in the vertical direction $y_3$. 
 This condition physically expresses a finite deposition rate (see \cite{EPBSG}) and will make the limit number density
 of particles to be $2$-dimensional in our thin-film regime.

\medskip

\nd {\bf Mass constraint}.
The following constant mass constraint is imposed on every order-parameter $\Phi\in H^2(\cal D)$:
$$
-\hspace{-2.4ex}\int_{\cal D} \Phi \, dy = m,
$$
where $m\in \R$ is a fixed constant.

\medskip

\nd {\bf Aim}.  We want to analyze the behavior of the energy $\cal F$ and its minimizers in the asymptotic thin-film regime where the relative thickness $\frac T L$ is very small.
First, we will employ the $\Gamma$-convergence method in order to deduce a reduced $2$-dimensional model that catches the asymptotic behavior of $\cal F$; second,
we will analyze the minimizers of the $\Gamma$-limit, more precisely, we will give a necessary and sufficient condition that guarantees that the uniform state $m$ is the
(unique) global minimizer of the limit functional.

\subsection{Dimension reduction: $\Gamma$-convergence result.}
The $\Gamma$-convergence technique is the usual way to carry out the dimension reduction and was already fruitful for energies involving higher order terms (see e.g. \cite{CF, FM, HPS, LZ1}).
We recall that a sequence of functionals $(G_n)_n$ defined on a topologic space $X$ with values into $\mathbb{R}\cup \{+\infty\}$ is $\Gamma$-converging to the limit functional
$G_0$ with respect to the topology of $X$ if and only if the following two conditions are satisfied for every $ x \in X$:
\begin{displaymath}
                \begin{cases}
                    \forall\; x_n\rightarrow x , \liminf_{n\to \infty} G_n (x_n) \geq G_0 (x) ,\\
                    \exists\; x_n\rightarrow x , G_n (x_n)\rightarrow G_0 (x) \textrm{ as } n\to \infty.
                \end{cases}
\end{displaymath} 
\nd {\bf Thin-film regime}. We consider the thin-film regime
\beq
\label{regime}
h:=\frac T L\to 0, \quad L\to 1.
\eeq
In order to carry out the asymptotic analysis,
we rescale the problem as follows: 

\medskip

\nd {\bf Scaling}. We consider the new variables
$$x_1=\frac{y_1}{L}, \quad x_2=\frac{y_2}{L},\quad x_3=\frac{y_3}{T}$$ so that $y\in {\cal D}$ if and only if 
$$x\in \Omega= [0,1)^2\times (0,1),$$ 
where the reference domain has the $2$-dimensional torus $[0,1)^2$ as basis and unit thickness in the vertical direction $x_3$.
The order parameter $\Phi:{\cal D} \rightarrow \R$ is rescaled as follows 
$$\varphi(x):=\Phi(L x_1,L x_2, T x_3), \quad x=(x_1, x_2, x_3)\in \Omega$$
for the rescaled order-parameter $\varphi:\Omega\to \R$. The nondimensionalized energy functional $\frac{1}{L^2 T} {\cal F}(\Phi)$ writes in terms of $\varphi$ as follows:
$$
F_{L, h}(\varphi):= \int_\Omega \bigg( \frac{1}{2} (\alpha \f + \frac1{L^2} \Delta' \f+ \frac{1}{L^2h^2} \partial_{33}\f)^2+ W(\f)\bigg)\, dx,
$$ 
where 
we denoted the in-plane laplacian by $\Delta' \f= \partial_{11}\f+ \partial_{22}\f$.

The boundary conditions transfer to the rescaled configuration $\f:\Omega\to \R$, i.e., $\f$ is $1$-periodic in the in-plane variable $x'=(x_1,x_2)$ and satisfies the zero
Neumann boundary condition on the top and bottom surfaces 
\beq
\label{null_flux}
\partial_3 \f = 0 \textrm{ in } [0,1)^2 \times \{0, 1\}.
\eeq
Also, the mass constraint on $\Phi$ transfers to $\f$ as 
$$\int_{\Omega} \f \, dx=m.$$ 
Therefore, we denote the set of admissible configurations by
$$V=\left\{\f \in H^2(\Omega)\, :\, \text{\eqref{null_flux} holds and }\int_{\Omega} \f \, dx=m\right\}.$$
Note that $V$ is a convex set in the space $H^2(\Omega)$. In the following, we restrict our functionals $F_{L, h}$ to the set $V$ endowed with the weak topology in $H^2(\Omega)$.

\medskip

\nd {\bf $\Gamma$-convergence}.
The aim is to prove that in the asymptotic regime \eqref{regime} the $\Gamma$-limit of functionals $(F_{L,h})$ on $V$ is given by
$$
F_*(\f)=\begin{cases}
\int_\Omega \big(\frac{1}{2} (\alpha \f + \Delta' \f)^2+ W(\f)\big)\, dx
&\quad\text{if } \f \in V_*,\\
+\infty& \quad \textrm{if } \f \in V\setminus V_*,
\end{cases}
$$
where $V_*$ is the subset of functions in $V$ that are invariant in the vertical direction, i.e.,
$$V_*=\left\{\f \in H^2(\Omega)\, :\,  \partial_3\f=0 \text{ in $\Omega$ and }\int_{\Omega} \f \, dx=m\right\}.$$
Note that for a configuration $\f\in V_*$, the functional $F_*$ corresponds to a $2$-dimensional functional on the torus $\mathbb{T}^2=[0,1)^2$:
$$F_*(\f)=\int_{\mathbb{T}^2} \bigg(\frac{1}{2} (\alpha \f + \Delta' \f)^2+ W(\f)\bigg)\, dx'.$$

\begin{Theorem}\label{TheoremGammaLimit}
Let $m\in \R$ and $(L_n)_n, (h_n)_n \subset (0, \infty)$ be two sequences such that $L_n\to 1$ and $h_n\to 0$ as $n\to \infty$. If 
$\alpha\in \R$ satisfies\footnote{If $k=(2\pi s, 2\pi t)$, then its Euclidian norm is denoted by $|k|^2=4\pi^2(s^2+t^2)$.}
\beq
\label{cond:alpha}
\alpha \notin \bigg\{|k|^2\, :\, k\in 2\pi \Z^2\setminus \{0\}\bigg\}
\eeq
and $W$ is a continuous potential on $\R$ bounded from below, i.e., 
\beq
\label{liminf} 
\liminf_{|s|\rightarrow +\infty} W(s)>-\infty,
\eeq
then the sequence of functionals $(F_{L_n, h_n})_n$ $\Gamma$-converges to $F_*$ in the weak $H^2(\Omega)$ topology. More precisely,

\medskip

\nd A. Compactness: If $(\f_n)_n$ is a sequence in $V$ such that $\limsup_{n\to \infty} F_{L_n,h_n}(\f_n)<\infty$, then up to a subsequence, 
$(\f_n)_n$ converges weakly in $H^2(\Omega)$ to a limit $\f_*\in V_*$.

\medskip

\nd B. Lower bound: If $(\f_n)_n\subset V$ converges weakly in $H^2(\Omega)$ to a limit 
$\f_*\in V$, then $\liminf_{n\to \infty} F_{L_n,h_n}(\f_n)\geq F_*(\f_*)$.

\medskip

\nd C. Upper bound: If $\f_*\in V$, then there exists a sequence $(\f_n)_n\subset V$ such that $\f_n\to \f_*$ strongly in $H^2(\Omega)$  and $\lim_{n\to \infty} F_{L_n,h_n}(\f_n)= F_*(\f_*)$.
\end{Theorem}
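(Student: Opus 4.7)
The backbone is an orthogonal decomposition: for $\varphi\in V$ write $\varphi=\psi+\eta$ with $\psi(x')=\int_0^1\varphi(x',x_3)\,dx_3$ and $\eta=\varphi-\psi$, so that $\eta$ has vanishing vertical mean. Using the Neumann condition \eqref{null_flux} one checks that $\alpha\eta+\tfrac{1}{L^2}\Delta'\eta+\tfrac{1}{L^2h^2}\partial_{33}\eta$ also has zero vertical mean, and is therefore $L^2(\Omega)$-orthogonal to the $x_3$-independent quantity $\alpha\psi+\tfrac{1}{L^2}\Delta'\psi$. This gives the identity
\begin{equation*}
\|\alpha\varphi+\tfrac{1}{L^2}\Delta'\varphi+\tfrac{1}{L^2h^2}\partial_{33}\varphi\|_{L^2(\Omega)}^2 = \|\alpha\psi+\tfrac{1}{L^2}\Delta'\psi\|_{L^2(\Omega)}^2 + \|\alpha\eta+\tfrac{1}{L^2}\Delta'\eta+\tfrac{1}{L^2h^2}\partial_{33}\eta\|_{L^2(\Omega)}^2,
\end{equation*}
which is the workhorse for all three parts.

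\textbf{Compactness (A).} Given $\sup_n F_{L_n,h_n}(\varphi_n)<\infty$, the lower bound \eqref{liminf} on $W$ makes the $L^2$-norm of the full symbol applied to $\varphi_n$ uniformly bounded. Combined with the mass constraint, the non-resonance condition \eqref{cond:alpha}, and the coercivity estimate of Lemma \ref{lem:coercive}, this yields $\sup_n\|\varphi_n\|_{H^2(\Omega)}<\infty$, hence weak $H^2$-compactness to some $\varphi_*$. The $H^2$-bound on $\varphi_n$ (and on $\eta_n=\varphi_n-\psi_n$) makes $\alpha\eta_n+\tfrac{1}{L_n^2}\Delta'\eta_n$ bounded in $L^2$, so the identity and the triangle inequality give $\|\partial_{33}\varphi_n\|_{L^2}=\|\partial_{33}\eta_n\|_{L^2}\le C L_n^2 h_n^2\to 0$. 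Passing to the weak limit, $\partial_{33}\varphi_*=0$, and together with the Neumann trace at $x_3=0,1$ (preserved under weak $H^2$-convergence) this forces $\partial_3\varphi_*\equiv 0$, i.e.\ $\varphi_*\in V_*$.

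\textbf{Lower bound (B) and upper bound (C).} For (B), I may assume $\liminf_n F_{L_n,h_n}(\varphi_n)<\infty$; along a subsequence attaining the liminf, (A) then forces $\varphi_*\in V_*$. Dropping the nonnegative $\eta$-part in the identity leaves $\tfrac12\|\alpha\psi_n+\tfrac{1}{L_n^2}\Delta'\psi_n\|_{L^2(\Omega)}^2$, whose liminf dominates $\tfrac12\|\alpha\varphi_*+\Delta'\varphi_*\|_{L^2(\Omega)}^2$ by weak lower semicontinuity (using continuity of the vertical average $P: H^2(\Omega)\to H^2(\mathbb{T}^2)$ together with $L_n\to 1$). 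For the potential, the compact embedding $H^2(\Omega)\hookrightarrow C^0(\overline{\Omega})$ and continuity of $W$ yield $\int_\Omega W(\varphi_n)\,dx\to\int_\Omega W(\varphi_*)\,dx$. For (C), the constant recovery sequence $\varphi_n\equiv\varphi_*$ suffices: if $\varphi_*\in V_*$, the identity collapses to the 2D expression and $\tfrac{1}{L_n^2}\to 1$ give $F_{L_n,h_n}(\varphi_*)\to F_*(\varphi_*)$; if $\varphi_*\in V\setminus V_*$, then $\partial_{33}\varphi_*\not\equiv 0$ in $L^2$ (otherwise the Neumann trace would force $\partial_3\varphi_*\equiv 0$), and the divergent prefactor $\tfrac{1}{L_n^4 h_n^4}$ in the identity sends $F_{L_n,h_n}(\varphi_*)\to+\infty=F_*(\varphi_*)$.

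\textbf{Main obstacle.} The substantive technical step is the $H^2$-coercivity claim used in (A), converting an $L^2$-bound on the full symbol into an $H^2$-bound on $\varphi_n$ uniformly in $n$. Hypothesis \eqref{cond:alpha} is exactly what avoids resonance of the 2D symbol $\alpha+\Delta'$ as $L_n\to 1$, and for vertical Fourier modes $j\ge 1$ the coefficient $\tfrac{j^2\pi^2}{L_n^2 h_n^2}$ dominates once $h_n\to 0$; this spectral analysis is the content of Lemma \ref{lem:coercive}, on which I would rely rather than reprove it.
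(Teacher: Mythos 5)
Your argument is correct and follows the same skeleton as the paper's proof: both rest on Lemma \ref{lem:coercive} for the uniform $H^2$-bound and the vanishing of the vertical derivative in part A, both obtain the lower bound by discarding a nonnegative remainder and applying weak lower semicontinuity of the $L^2$-norm to the surviving term together with uniform convergence of $W(\varphi_n)$ via the compact embedding $H^2(\Omega)\hookrightarrow C^0(\overline\Omega)$, and both use the constant recovery sequence in part C. The one genuinely different ingredient is your treatment of the cross terms: you split $\varphi=\psi+\eta$ into the vertical average and its zero-mean complement and observe that, thanks to the Neumann condition \eqref{null_flux}, the full symbol applied to $\eta$ has zero vertical mean, giving an exact Pythagorean identity; the paper instead expands the square, integrates by parts to get $\int_\Omega \varphi\,\partial_{33}\varphi = -\int_\Omega(\partial_3\varphi)^2$ and $\int_\Omega \Delta'\varphi\,\partial_{33}\varphi = \int_\Omega|\nabla'\partial_3\varphi|^2\ge 0$, and absorbs the signed term $-2h^{-2}L^2\alpha\int(\partial_3\varphi)^2$ using the one-dimensional Poincar\'e inequality on $(0,1)$. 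Your identity is cleaner (no sign to absorb, no Poincar\'e needed at this stage, and the lower bound drops an exactly orthogonal piece rather than estimating cross terms), whereas the paper's expansion is what its proof of Lemma \ref{lem:coercive} already sets up, so the two steps share their computations; also note that in part A the paper gets $\|\partial_3\varphi_n\|_{L^2}\to 0$ directly from the coercivity estimate and so avoids the trace argument you need to pass from $\partial_{33}\varphi_*=0$ to $\partial_3\varphi_*=0$ (your trace step is nonetheless valid, since the trace of $\partial_3\varphi_n$ passes to the weak $H^2$-limit).
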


The main ingredient in the proof of the $\Gamma$-convergence is given by the coercivity of the functional $F_{L,h}$ in $V$:

\begin{Lemma}\label{lem:coercive}
Let $m\in \R$  and $W$ be a continuous potential on $\R$ with \eqref{liminf}. Then for every $\alpha \in \R$ with \eqref{cond:alpha}, there exist
$C, \eps, h_0>0$ (all depending on $\alpha$) such that for every $L\in (1-\eps, 1+\eps)$ and every $h\in (0, h_0)$ we have
$$
F_{L,h}(\f)\geq C\bigg(\|\f-m\|^2_{H^2(\Omega)}+\frac{1}{h^4} \int_\Omega (\partial_{3}\f)^2 \, dx\bigg)+ \inf W, \quad \textrm{for all } \f \in V.
$$
\end{Lemma}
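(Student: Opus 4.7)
The plan is to work in Fourier coordinates, where the linear operator
$$\LL_{L,h} := \alpha + \frac{1}{L^2}\Delta' + \frac{1}{L^2 h^2}\partial_{33}$$
diagonalizes in the basis adapted to the side conditions defining $V$, namely in-plane periodicity and the Neumann condition \eqref{null_flux}. Accordingly, I would expand any $\f\in V$ as
$$\f(x) = \sum_{k\in 2\pi\Z^2}\sum_{j\geq 0} \hat\f(k,j)\, e^{ik\cdot x'}\cos(j\pi x_3),$$
with $\hat\f(0,0)=m$ imposed by the mass constraint. The operator $\LL_{L,h}$ acts on the $(k,j)$-mode by multiplication by
$$\mu_{k,j}(L,h) := \alpha - \frac{|k|^2}{L^2} - \frac{(j\pi)^2}{L^2 h^2}.$$
By Plancherel, $\int_\Omega(\LL_{L,h}\f)^2\,dx$, $\|\f-m\|_{H^2(\Omega)}^2$ and $\int_\Omega(\partial_3\f)^2\,dx$ are all equivalent to weighted $\ell^2$ sums over $(k,j)$. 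Since $\int_\Omega W(\f)\,dx\geq \inf W$ (using $|\Omega|=1$ and that \eqref{liminf} plus continuity forces $W$ to be bounded below), the lemma reduces to the uniform spectral estimate
$$\mu_{k,j}(L,h)^2 \geq C\Big[\bigl(1+|k|^2+(j\pi)^2\bigr)^2 + \frac{(j\pi)^2}{h^4}\Big]\qquad \forall\,(k,j)\neq(0,0),$$
with $C$ independent of $L\in (1-\eps,1+\eps)$ and $h\in (0,h_0)$.

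I would prove this estimate by splitting into two regimes. \emph{Case 1: $j\geq 1$.} Then $(j\pi)^2/(L^2 h^2)\geq \pi^2/(4h^2)$ dominates $|\alpha|$ once $h$ is small enough (uniformly in $L$ near $1$), giving $|\mu_{k,j}| \geq |k|^2/L^2 + (j\pi)^2/(2L^2 h^2)$; squaring via $(a+b)^2\geq a^2+b^2$ delivers $\mu_{k,j}^2 \gtrsim |k|^4 + (j\pi)^4/h^4$, which majorizes the right-hand side whenever $h\leq 1$ since $(j\pi)^2\geq\pi^2>1$. \emph{Case 2: $j=0$, $k\neq 0$.} For $|k|^2\geq 4|\alpha|$ one has $|\mu_{k,0}|\geq |k|^2/(2L^2)$, which directly yields the bound. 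Only the finitely many $k\in 2\pi\Z^2\setminus\{0\}$ with $|k|^2<4|\alpha|$ remain; on this finite set, hypothesis \eqref{cond:alpha} ensures $|\alpha-|k|^2|>0$ at $L=1$, and continuity of $L\mapsto \alpha-|k|^2/L^2$ combined with the finiteness allows one to shrink $\eps$ so that $|\mu_{k,0}(L,h)|\geq \delta>0$ uniformly for $L\in (1-\eps,1+\eps)$, which is enough because $|k|$ is bounded on this finite set.

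The main obstacle is precisely the resonance exposed in Case~2: if $\alpha$ coincided with $|k|^2$ for some $k\in 2\pi\Z^2\setminus\{0\}$, then $\mu_{k,0}(1,h)=0$ and coercivity would collapse along that mode, regardless of $h$. Hypothesis \eqref{cond:alpha} is tailored exactly to rule this out; the substantive technical point is to transfer the per-mode positivity at $L=1$ into a uniform gap over $L\in (1-\eps,1+\eps)$, and this works only because the set of potentially dangerous lattice vectors ($|k|^2\leq 4|\alpha|$) is finite, so a compactness/continuity argument produces a single $\eps$ that handles them simultaneously. Summing the pointwise spectral bound over $(k,j)$ and adding $\inf W$ then yields the claimed coercivity estimate.
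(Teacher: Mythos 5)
Your proposal is correct, and it takes a genuinely different route from the paper in how it handles the vertical variable. The paper expands $\f$ in Fourier series only in $x'$ (keeping $x_3$-dependent coefficients $a_k(x_3), b_k(x_3)$), kills the cross terms $\int_\Omega \f\,\partial_{33}\f\,dx$ and $\int_\Omega \Delta'\f\,\partial_{33}\f\,dx$ by integration by parts (this is exactly where the Neumann condition \eqref{null_flux} and the in-plane periodicity enter), and then controls $\frac{1}{h^4}(\partial_{33}\f)^2-\frac{2L^2\alpha}{h^2}(\partial_3\f)^2$ via the Poincar\'e inequality for $\partial_3\f(x',\cdot)\in H^1_0((0,1))$. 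You instead diagonalize the full operator on the mixed basis $e^{ik\cdot x'}\cos(j\pi x_3)$ and reduce everything to a pointwise bound on the symbol $\mu_{k,j}$; the Neumann condition enters through the legitimacy of term-by-term differentiation of the cosine series (equivalently, the vanishing of the boundary terms when computing the cosine coefficients of $\partial_{33}\f$), which, as in the paper's footnote, is justified for general $\f\in V$ by density of smooth admissible functions. The quantitative heart is the same in both arguments: the uniform spectral gap $\inf_{k\neq 0}\bigl(\alpha-|k|^2/L^2\bigr)^2\geq C>0$ for $L$ near $1$, guaranteed by \eqref{cond:alpha} plus the finiteness of the dangerous lattice vectors, and the $\pi^2/h^2$ lower bound on the vertical frequency contribution. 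Your version is arguably more transparent (every term in the conclusion is read off from one symbol inequality, and the role of \eqref{cond:alpha} is isolated cleanly in Case~2), while the paper's version avoids the full diagonalization and produces the intermediate inequality \eqref{ineg_c}, which is reused verbatim in Remark \ref{rem:coercive} and in the lower-bound step of Theorem \ref{TheoremGammaLimit}. Both are complete proofs of the stated estimate.
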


Thanks to the $\Gamma$-convergence result in Theorem \ref{TheoremGammaLimit}, the minimizers of $F_{L,h}$ converge to the minimizers of the limit functional $F_*$ over $V_*$  in the regime \eqref{regime}. 
This justifies the importance of the analysis of the minimizers of the limit problem that is done in the next section.

\subsection{Optimality of the uniform state in the PFC model.}

The aim of this section is to analyze the minimizers of the $\Gamma$-limit $F_*$ over the set $V_*$. 
In \cite[Theorem 3.1]{SCN}, the authors provide a lower bound for the order-disorder phase transition which is illustrated by the fact of whether or not the constant state is a global minimizer.
They also provide numerical results for this phase transition. 
In our analysis, we provide the exact phase transition in the case of the double-well potential $W$ obtaining a necessary and sufficient condition for global minimality of the uniform state.

Note that if $W$ is a $C^1$ potential, then the Euler-Lagrange equation satisfied by a critical point $\f_*$ of $F_*$ over $V_*$ is the following:
\beq
\label{E-L}
(\Delta')^2\f_*+2\alpha \Delta' \f_*+\alpha^2\f_*+\frac{d W}{d \f}(\f_*)=\alpha^2 m+\int_{\Omega} \frac{d W}{d \f}(\f_*)\, dx,
\eeq
where the right-hand side is due to the constant mass constraint. The necessary and sufficient condition for the uniform state $\f_*=m$ to be a stable critical point of $F_*$ over $V_*$ for $C^2$ potentials $W$ is:
\beq
\label{cond_min}
\frac{d^2 W}{d \f^2}(m)+\min_{k\in 2\pi \Z^2, k\neq 0} (\alpha-|k|^2)^2\geq 0.
\eeq
However, in order to ensure that $\f_*=m$ is a global minimizer of $F_*$ over $V_*$ we need a stronger assumption that is related to the following optimal constant in the $2$-dimensional torus $\mathbb{T}^2$:
\begin{align*}
P_{N=2}&:=\inf\left\{ \int_{\mathbb{T}^2} \bigg((\alpha u+\Delta u)^2
+ \frac{d^2 W}{d \f^2}(m) u^2 \bigg) \, dx  \int_{\mathbb{T}^2} u^4\, dx \, : \, \right.\\
\nonumber
&\hspace{5cm}\, \left. u:\mathbb{T}^2\to \R, \, \int_{\mathbb{T}^2} u^3\, dx=1, \, \int_{\mathbb{T}^2} u\, dx=0  \right\}.
\end{align*}
(In Proposition \ref{prop} below, we will relate the above constant $P_2$ with the condition \eqref{cond_min}.)
Our main result provides a necessary and sufficient condition for the state $\f_*=m$ to be a
(unique) global minimizer of $F_*$ over $V_*$ in the case of the double-well potential $W$ which is an improvement of the result \cite[Theorem 3.1]{ChoksiPe} and \cite[Proposition 3.1]{Glasner}. 

\begin{Theorem}\label{the:optimal}
Let $m, \alpha \in \R$ and $W\in C^2(\R)$. 

1. The uniform state $\f_*=m$ is a stable critical point of $F_*$ over $V_*$ if and only if 
\eqref{cond_min} holds true. 

\medskip

2.  Assume that $W\in C^4(\R)$ satisfies $\frac{d^4 W}{d \f^4}\geq w^2$ in $\R$ for some constant $w>0$. 
If \eqref{cond_min} holds true and
\beq
\label{ineg22}
P_2\geq \frac1{3w^2}\left(\frac{d^3 W}{d \f^3}(m) \right)^2,
\eeq
then $m$ is a global minimizer of $F_*$ over $V_*$. Moreover, if the inequality \eqref{ineg22} is strict, then $m$ is the unique global minimizer of  $F_*$ over $V_*$.

\medskip

3. Let $W\in C^4(\R)$ such that $\frac{d^4 W}{d \f^4}=w^2$ in $\R$ for some constant $w>0$ and assume that the inequality in \eqref{cond_min} is strict. Then $m$ is {\bf not} a global minimizer of $F_*$ over $V_*$ provided that
$P_2< \frac1{3w^2}\left(\frac{d^3 W}{d \f^3}(m) \right)^2.
$
\end{Theorem}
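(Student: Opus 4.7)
The plan proceeds in three parts mirroring the three assertions of Theorem \ref{the:optimal}. Throughout, admissible perturbations of $m$ in $V_*$ are functions $u\in H^2(\mathbb{T}^2)$ with $\int_{\mathbb{T}^2}u\,dx'=0$. \emph{Part 1.} The uniform state is automatically a critical point of $F_*$: the first variation vanishes thanks to $\int u=\int\Delta u=0$ and the constancy of $\alpha m$ and $\tfrac{dW}{d\f}(m)$. Expanding the second variation by Fourier series $u=\sum_{k\in 2\pi\Z^2\setminus\{0\}}\hat u_k\,e^{ik\cdot x'}$ on $\mathbb{T}^2=[0,1)^2$ (the zero mode is absent because of the mass constraint) and using Parseval,
\[
\frac{d^2}{d\eps^2}\bigg|_{\eps=0}F_*(m+\eps u)=\sum_{k\neq 0}\bigl[(\alpha-|k|^2)^2+\tfrac{d^2W}{d\f^2}(m)\bigr]\,|\hat u_k|^2,
\]
whose nonnegativity for every admissible $u$ is equivalent to \eqref{cond_min}.

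\emph{Part 2.} Write $\f=m+u$ with $u$ admissible. A fourth-order Taylor expansion of $W$ with integral remainder, combined with $\tfrac{d^4W}{d\f^4}\geq w^2$, yields
\[
F_*(m+u)-F_*(m)\;\geq\; A(u)+\tfrac{1}{6}\tfrac{d^3W}{d\f^3}(m)\,B(u)+\tfrac{w^2}{24}\,C(u),
\]
where $A(u):=\tfrac12\int_{\mathbb{T}^2}\bigl[(\alpha u+\Delta u)^2+\tfrac{d^2W}{d\f^2}(m)u^2\bigr]dx'$, $B(u):=\int u^3$, $C(u):=\int u^4$; condition \eqref{cond_min} is precisely $A\geq 0$. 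Rescaling $u\mapsto|B(u)|^{-1/3}u$ inside the definition of $P_2$ (which preserves mean zero) gives $2A(u)C(u)\geq P_2\,B(u)^2$ whenever $B(u)\neq 0$, so AM-GM produces
\[
A(u)+\tfrac{w^2}{24}C(u)\;\geq\; 2\sqrt{\tfrac{w^2}{24}A(u)C(u)}\;\geq\;\tfrac{w\sqrt{P_2}}{2\sqrt 3}\,|B(u)|.
\]
Assumption \eqref{ineg22} is equivalent to $\tfrac{w\sqrt{P_2}}{2\sqrt 3}\geq \tfrac{1}{6}\bigl|\tfrac{d^3W}{d\f^3}(m)\bigr|$, so the right-hand side majorizes $-\tfrac{1}{6}\tfrac{d^3W}{d\f^3}(m)B(u)$ and $F_*(m+u)\geq F_*(m)$ follows. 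Under strict \eqref{ineg22} the chain is strict when $B(u)\neq 0$; when $B(u)=0$ and $u\not\equiv 0$, the bound $A(u)+\tfrac{w^2}{24}C(u)>0$ (from $A\geq 0$ and $C>0$) gives strict inequality anyway, yielding uniqueness.

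\emph{Part 3.} When $\tfrac{d^4W}{d\f^4}\equiv w^2$ the Taylor expansion is exact. The standing hypothesis forces $\tfrac{d^3W}{d\f^3}(m)\neq 0$ since $P_2\geq 0$ under \eqref{cond_min}. Using the $u\mapsto-u$ symmetry in the definition of $P_2$, pick an admissible minimizing sequence $(u_n)$ with $|\int u_n^3|=1$, $\tfrac{d^3W}{d\f^3}(m)\int u_n^3<0$, and $2A(u_n)C(u_n)\to P_2$. For $\lambda>0$, testing $u=\lambda u_n$ gives
\[
F_*(m+\lambda u_n)-F_*(m)\;=\;\lambda^2\Bigl(A(u_n)-\tfrac{1}{6}\bigl|\tfrac{d^3W}{d\f^3}(m)\bigr|\lambda+\tfrac{w^2}{24}C(u_n)\lambda^2\Bigr),
\]
and the parenthesized quadratic in $\lambda$ takes negative values for suitable $\lambda>0$ iff its discriminant is positive, i.e.\ $2A(u_n)C(u_n)<\tfrac{1}{3w^2}(\tfrac{d^3W}{d\f^3}(m))^2$, which holds for $n$ large.

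The main obstacle I anticipate is the uniqueness part of (2): verifying that neither AM-GM inequality can saturate at a nontrivial $u$ once \eqref{ineg22} is strict, which requires a careful case split on the sign of $\tfrac{d^3W}{d\f^3}(m)B(u)$. A secondary concern is that Part 3 only uses a minimizing sequence (not an attained infimum) for $P_2$, so the discriminant inequality need only hold in the limit, which is the reason for the strict inequality required in the hypothesis.
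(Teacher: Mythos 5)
Your proof is correct, and for parts 1 and 2 it follows essentially the paper's route: Fourier/Plancherel for the second variation, the fourth-order Taylor expansion with integral remainder bounded below via $\frac{d^4W}{d\f^4}\ge w^2$, and then the reduction to showing $A+\frac16\frac{d^3W}{d\f^3}(m)B+\frac{w^2}{24}C\ge 0$ using $2AC\ge P_2B^2$; your AM--GM step is just the paper's discriminant inequality $4\cdot A\cdot\frac{w^2}{24}C\ge\bigl(\frac16\frac{d^3W}{d\f^3}(m)B\bigr)^2$ in disguise, and your treatment of uniqueness (strict chain when $B\neq0$, positivity of the quartic term when $B=0$ and $u\not\equiv0$) is exactly what is needed, so the ``obstacle'' you flag is already resolved by your own argument. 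The one genuine difference is in part 3. The paper first proves that the infimum $P_2$ is \emph{attained} when the inequality in \eqref{cond_min} is strict (Proposition \ref{prop}, point 2 --- this is where the compactness argument and the dimension restriction $N<12$ in the general Theorem \ref{gen:optimal} enter), and then evaluates the energy on dilates $m+tu$ of the attained minimizer. You instead use only a minimizing sequence $(u_n)$ and note that the failure of \eqref{ineg22} is an open condition on $2A(u_n)C(u_n)\to P_2$, hence holds for $n$ large. This buys something: attainment is never needed, the dimension restriction disappears, and the strict inequality in \eqref{cond_min} is used only through the weak consequence $P_2\ge0$ (which already follows from the non-strict version and rules out $\frac{d^3W}{d\f^3}(m)=0$ and guarantees $A(u_n)\ge0$, so that ``negative values at some $\lambda>0$'' is indeed equivalent to ``positive discriminant''). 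What the paper's route buys in exchange is Proposition \ref{prop} as a standalone statement, including the strict inequality $P_N>\frac{d^2W}{d\phi^2}(m)+\min_k(\alpha-|k|^2)^2$. One cosmetic slip: the rescaling in part 2 should use the signed cube root $B(u)^{-1/3}u$ (or replace $u$ by $-u$ afterwards), since $|B(u)|^{-1/3}u$ violates the constraint $\int u^3=1$ when $B(u)<0$; the resulting inequality $2A(u)C(u)\ge P_2B(u)^2$ is unaffected because $A$ and $C$ are even.
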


In Section \ref{sec:optima} we prove the above result in any dimension $N\geq 1$. Moreover, in Remark~\ref{potPFC} below, we interpret the necessary and sufficient condition in Theorem \ref{the:optimal}
 in the case of the potential $W(\f)=\frac14(\f^2-a)^2$ with $a>0$ typical for the PFC model and compare it with the works \cite{ChoksiPe, EKHG, Glasner, SCN}.

\subsection{Uniform state for the  Ohta-Kawasaki energy.}

We present now Theorem \ref{the:optimal} in the case of the Ohta-Kawasaki functional
for self-assembly of diblock copolymers (see e.g. \cite{OK, ChoksiPe}).
Let $\mathbb{T}^N=[0,1)^N$ be the $N$-dimensional torus and consider the set 
of periodic configurations $\phi$ of average $m\in \R$: 
$$H_m^1(\mathbb{T}^N)=\bigg\{\phi\in H^1(\mathbb{T}^N)\, :\, \int_{\mathbb{T}^N} \phi \, dx=m\bigg\}.$$
The Ohta-Kawasaki energy is defined as
$$
{\cal E}(\phi)= \int_{\mathbb{T}^N} \bigg(\frac{1}{2\gamma^2}|\nabla \phi|^2
+ \frac12 |\nabla(-\Delta)^{-1}(\phi-m)|^2+W(\phi)\bigg) \, dx, \quad \phi\in H_m^1(\mathbb{T}^N),
$$
where $W$ is a $C^2$ potential and $\gamma>0$ is a constant parameter. The second term in the above functional can be rewritten as
$$\int_{\mathbb{T}^N} |\nabla(-\Delta)^{-1}(\phi-m)|^2\, dx=\|\phi-m\|^2_{\dot{H}^{-1}(\T)}=\int_{\mathbb{T}^N} |\nabla \psi|^2\, dx,$$
where $\psi$ is the unique solution of the problem
\beq
\label{psi}
-\Delta \psi=\phi-m \textrm{ in }  \mathbb{T}^N \quad \textrm{ and } \quad \int_{\mathbb{T}^N} \psi=0.
\eeq
The necessary and sufficient condition for the uniform state $\phi_*=m$ to be a stable critical point of $\cal E$ over $H_m^1(\mathbb{T}^N)$ is
\beq
\label{cond_OK}
\frac{d^2 W}{d \phi^2}(m)+\min_{k\in 2\pi \Z^N, k\neq 0} \left(\frac{|k|^2}{\gamma^2}+\frac1{|k|^2}\right)\geq 0.
\eeq
Furthermore, the necessary and sufficient condition for the uniform state $\phi_*=m$ to be a global minimizer of $\cal E$ over $H_m^1(\mathbb{T}^N)$ is related with the following optimal constant: for the fixed constants $\gamma^2>0$ and $\frac{d^2 W}{d \phi^2}(m)\in \R$, let 
\begin{align}
\label{qn}
Q_N&:=\inf\left\{ \int_{\mathbb{T}^N} \bigg(\frac{1}{\gamma^2}|\nabla u|^2
+  |\nabla(-\Delta)^{-1}u|^2
+ \frac{d^2 W}{d \phi^2}(m) u^2 \bigg) \, dx  \int_{\mathbb{T}^N} u^4\, dx \, : \right.\\
\nonumber
&\hspace{5cm}\, \left. u:\mathbb{T}^N\to \R, \, \int_{\mathbb{T}^N} u^3\, dx=1, \, \int_{\mathbb{T}^N} u\, dx=0  \right\}.
\end{align}
In the next Proposition, we prove how the optimal constant $Q_N$ in \eqref{qn} is related with \eqref{cond_OK}. We also provide a sufficient condition in order that $Q_N$ is achieved in \eqref{qn}.

\begin{Proposition}\label{prop2} 
Let $\gamma^2>0$ and $\frac{d^2 W}{d \phi^2}(m)\in \R$ be fixed constants.

\nd 1. If \eqref{cond_OK} holds true, then 
\beq
\label{ineg_qn}
Q_N\geq \frac{d^2 W}{d \phi^2}(m)+\min_{k\in 2\pi \Z^N, k\neq 0} \left(\frac{|k|^2}{\gamma^2}+\frac1{|k|^2}\right).
\eeq

\nd 2. If the inequality in \eqref{cond_OK} is strict, 
then the infimum in \eqref{qn} is achieved provided that $N<6$. Moreover, the inequality in \eqref{ineg_qn} is strict, i.e., 
$$
Q_N> \frac{d^2 W}{d \phi^2}(m)+\min_{k\in 2\pi \Z^N, k\neq 0} \left(\frac{|k|^2}{\gamma^2}+\frac1{|k|^2}\right).
$$
\end{Proposition}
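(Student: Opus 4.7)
The plan is to combine a Fourier/Plancherel estimate for the quadratic form, Cauchy--Schwarz for the cubic constraint, and the direct method of the calculus of variations. Let $F(u):=\int_{\T}(|\nabla u|^2/\gamma^2+|\nabla(-\Delta)^{-1}u|^2+\frac{d^2W}{d\phi^2}(m)u^2)\,dx$ and set $\lambda_0:=\frac{d^2W}{d\phi^2}(m)+\min_{k\in 2\pi\Z^N\setminus\{0\}}(|k|^2/\gamma^2+1/|k|^2)$, so that \eqref{cond_OK} reads $\lambda_0\geq 0$.

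For Part~1, I would expand any admissible $u$ in its Fourier series on $\T^N$ (only $k\neq 0$ contribute since $\int u=0$) and apply Plancherel to obtain
$$F(u)=\sum_{k\neq 0}\Bigl(\tfrac{|k|^2}{\gamma^2}+\tfrac{1}{|k|^2}+\tfrac{d^2W}{d\phi^2}(m)\Bigr)|\hat u_k|^2\geq \lambda_0\|u\|_{L^2}^2.$$
Cauchy--Schwarz applied to $1=\int u\cdot u^2$ gives $\|u\|_{L^2}^2\int u^4\geq 1$. Multiplying the two yields $F(u)\int u^4\geq \lambda_0$, and taking the infimum proves \eqref{ineg_qn}.

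For the existence claim of Part~2, with $\lambda_0>0$, I exploit the invariance of the quotient $F(u)\int u^4/(\int u^3)^2$ under $u\mapsto\mu u$ to select a minimizing sequence $(u_n)$ with $\|u_n\|_{L^2}=1$ and $\int u_n=0$, writing $c_n:=\int u_n^3$. From $c_n^2\leq\int u_n^4$ (Cauchy--Schwarz) and $F(u_n)\int u_n^4\leq (Q_N+1)c_n^2$ I deduce $F(u_n)\leq Q_N+1$, which, together with $\lambda_0>0$, controls $\|u_n\|_{H^1}$. For $N<6$ the compact embedding $H^1(\T^N)\hookrightarrow L^3(\T^N)\cap L^2(\T^N)$ produces a weak $H^1$-limit $u^*$ with $\|u^*\|_{L^2}=1$, $\int u^*=0$ and $c^*=\lim c_n$. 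The case $c^*=0$ is excluded because $F(u_n)\int u_n^4\geq\lambda_0\|u_n\|_{L^2}^4=\lambda_0>0$ whereas $F(u_n)\int u_n^4=Q_N c_n^2+o(1)\to 0$ would follow. The bound $\int u_n^4\leq(Q_N+1)c_n^2/\lambda_0$ then gives $L^4$-boundedness, so Fatou's lemma and weak $H^1$ lower semi-continuity of $F$ yield $F(u^*)\int(u^*)^4\leq Q_N(c^*)^2$, and $\tilde u:=u^*/(c^*)^{1/3}$ (replacing $u^*$ by $-u^*$ if needed) is admissible and achieves $Q_N$.

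The strict inequality $Q_N>\lambda_0$ then follows by contradiction: equality would force equality in both steps of Part~1 at the minimizer $\tilde u$, meaning $\hat{\tilde u}_k=0$ except on the finite set of $k$ attaining the Fourier minimum (so $\tilde u$ is a trigonometric polynomial, hence continuous) and $\tilde u=c\,\tilde u^2$ a.e.; combining these, $\tilde u$ is continuous and two-valued, thus constant, which together with $\int\tilde u=0$ gives $\tilde u\equiv 0$, contradicting $\int\tilde u^3=1$. The main obstacle I anticipate is the compactness step in the middle dimensions $3\leq N\leq 5$: one must bootstrap from $L^3$ compactness to $L^4$ boundedness of $(u_n)$ and exclude vanishing of the cubic integral along the minimizing sequence, and it is precisely through the Sobolev threshold of $H^1\hookrightarrow L^3$ that the restriction $N<6$ enters.
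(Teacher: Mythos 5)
Your proof is correct and follows essentially the same route as the paper, which establishes Proposition \ref{prop2} by the argument of Proposition \ref{prop}: a Plancherel lower bound plus Cauchy--Schwarz (equivalently, the H\"older chain $\int u^4\int u^2\geq(\int|u|^3)^2\geq(\int u^3)^2$) for part 1, the direct method with the compact embedding $H^1(\T)\subset L^3(\T)$ for $N<6$ (and $H^1(\T)\subset \dot H^{-1}(\T)$, which is what makes your ``weak $H^1$ lower semicontinuity of $F$'' legitimate despite the possibly negative $\frac{d^2W}{d\phi^2}(m)$) for existence, and an equality-case analysis for strictness. The only cosmetic differences are your $L^2$-normalization of the minimizing sequence, which forces you to exclude $c^*=0$ separately (the paper keeps $\int u_n^3=1$ and avoids this), and your use of the Cauchy--Schwarz equality condition, where incidentally the continuity step is unnecessary: $\tilde u\in\{0,c\}$ a.e.\ together with $\int_{\T}\tilde u\,dx=0$ already forces $\tilde u=0$ a.e.
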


We prove the following necessary and sufficient condition for the uniform state $\phi_*=m$ to be the (unique) global minimizer of $\cal E$ over $H_m^1(\mathbb{T}^N)$ in the case of the double-well potential $W$.

\begin{Theorem}\label{the:optimal_OK}
Let $m \in \R$, $\gamma>0$ and $W\in C^2(\R)$.

1. The uniform state $\phi_*=m$ is a stable critical point of $\cal E$ over $H_m^1(\mathbb{T}^N)$ if and only if \eqref{cond_OK} holds true.

\medskip

2.  Assume that $W\in C^4(\R)$ satisfies $\frac{d^4 W}{d \phi^4}\geq w^2$ in $\R$ for some constant $w>0$. Then $m$ is a global minimizer of $\cal E$ over $H_m^1(\mathbb{T}^N)$ if \eqref{cond_OK} holds true and
\beq
\label{cond_optima_OK}
Q_N\geq \frac1{3w^2}\left(\frac{d^3 W}{d \phi^3}(m) \right)^2.
\eeq
Moreover, if the inequality in \eqref{cond_optima_OK} is strict, then $m$ is the unique global minimizer of $\cal E$ over $H_m^1(\mathbb{T}^N)$.

\medskip

3. Assume that $W\in C^4(\R)$ has constant $4$-derivative, i.e., $\frac{d^4 W}{d \f^4}=w^2$ in $\R$ for some constant $w>0$. If $N<6$ and the inequality in \eqref{cond_OK} is strict, then $m$ is {\bf not} a global minimizer of $\cal E$ over $H_m^1(\mathbb{T}^N)$ provided that
\eqref{cond_optima_OK} fails to be true.
\end{Theorem}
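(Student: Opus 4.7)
The plan is to adapt the variational strategy used for Theorem \ref{the:optimal} to the Ohta--Kawasaki setting. Writing $\phi=m+u$ with $u\in H^1(\mathbb{T}^N)$ and $\int_{\mathbb{T}^N} u\,dx=0$, set
$A(u)=\int_{\mathbb{T}^N}\bigl(\tfrac{1}{\gamma^2}|\nabla u|^2+|\nabla(-\Delta)^{-1}u|^2+\tfrac{d^2W}{d\phi^2}(m)u^2\bigr)dx$, $B(u)=\int_{\mathbb{T}^N} u^3\,dx$, $C(u)=\int_{\mathbb{T}^N} u^4\,dx$. For Part 1, the expansion $\mathcal{E}(m+\varepsilon u)-\mathcal{E}(m)=\tfrac{\varepsilon^2}{2}A(u)+O(\varepsilon^3)$ (the linear term vanishes because $\int u=0$) reduces stability of $m$ to the non-negativity of $A(u)$ for every admissible $u$. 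A Fourier series expansion on $\mathbb{T}^N$ rewrites $A(u)=\sum_{k\in 2\pi\Z^N\setminus\{0\}}\bigl(\tfrac{|k|^2}{\gamma^2}+\tfrac{1}{|k|^2}+\tfrac{d^2W}{d\phi^2}(m)\bigr)|\hat u_k|^2$, and the non-negativity of every bracketed coefficient is exactly \eqref{cond_OK}.

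For Part 2, the hypothesis $W^{(4)}\geq w^2$ yields the pointwise Taylor inequality $W(m+u)-W(m)-W'(m)u\geq \tfrac{W''(m)}{2}u^2+\tfrac{W'''(m)}{6}u^3+\tfrac{w^2}{24}u^4$. Integrating and using $\int u=0$ to kill the linear contribution gives $\mathcal{E}(m+u)-\mathcal{E}(m)\geq \tfrac{1}{2}A(u)+\tfrac{W'''(m)}{6}B(u)+\tfrac{w^2}{24}C(u)$. Under \eqref{cond_OK}, Part 1 gives $A(u)\geq 0$, while a scaling argument against the definition of $Q_N$ (applied to $u/B(u)^{1/3}$ when $B(u)\ne 0$) produces $A(u)C(u)\geq Q_N\,B(u)^2$; if $B(u)=0$ the cubic contribution is absent. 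AM--GM then delivers $\tfrac{1}{2}A(u)+\tfrac{w^2}{24}C(u)\geq \tfrac{w\sqrt{Q_N}}{2\sqrt{3}}|B(u)|$, and \eqref{cond_optima_OK} is exactly the inequality $\tfrac{w\sqrt{Q_N}}{2\sqrt{3}}\geq \tfrac{|W'''(m)|}{6}$ that absorbs the cubic term; hence $\mathcal{E}(m+u)\geq \mathcal{E}(m)$. If \eqref{cond_optima_OK} is strict, chasing the equality cases forces $B(u)=0$ and then $A(u)=C(u)=0$, so $u\equiv 0$ and $m$ is the unique global minimizer.

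For Part 3, since $W^{(4)}\equiv w^2$, the potential $W$ is a quartic polynomial and the Taylor inequality is an equality. Proposition \ref{prop2} (applicable because $N<6$ and \eqref{cond_OK} is strict) supplies a minimizer $u_0$ of $Q_N$ with $\int u_0=0$, $\int u_0^3=1$ and $A(u_0)C(u_0)=Q_N$. Testing with $\phi=m+\lambda u_0$ gives exactly $\mathcal{E}(m+\lambda u_0)-\mathcal{E}(m)=\tfrac{A(u_0)}{2}\lambda^2+\tfrac{W'''(m)}{6}\lambda^3+\tfrac{w^2 C(u_0)}{24}\lambda^4$. Dividing by $\lambda^2$ yields a quadratic in $\lambda$ whose minimum equals $\tfrac{A(u_0)}{2}-\tfrac{(W'''(m))^2}{6w^2 C(u_0)}$; using $A(u_0)C(u_0)=Q_N$, this minimum is negative precisely when $3w^2 Q_N<(W'''(m))^2$, i.e.\ exactly when \eqref{cond_optima_OK} fails. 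Choosing $\lambda$ at that optimum produces $\mathcal{E}(m+\lambda u_0)<\mathcal{E}(m)$.

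The main obstacle I anticipate is the existence of a minimizer for $Q_N$ needed to build the test profile in Part 3. The coercivity of $A(u)$ only controls the $H^1$ norm (plus an $\dot H^{-1}$ piece), while the functional to minimize contains $\int u^4$, and $H^1(\mathbb{T}^N)\hookrightarrow L^4(\mathbb{T}^N)$ is compact only for $N<4$; in the remaining dimensions $N=4,5$ one has to extract the strong $L^4$ limit of a minimizing sequence by exploiting the full Ohta--Kawasaki coercivity together with the mean-zero and cubic-normalization constraints. This is precisely the content of Proposition \ref{prop2} and the reason for the restriction $N<6$; once it is available, the remaining ingredients of the proof (second-variation computation, Taylor expansion and AM--GM) are essentially routine.
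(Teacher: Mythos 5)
Your proposal is correct and follows essentially the same route as the paper: the second variation via Fourier series for part 1, the integrated Taylor bound plus the homogeneity of $Q_N$ (giving $A(u)C(u)\geq Q_N B(u)^2$) to absorb the cubic term for part 2 --- your AM--GM step is just a repackaging of the discriminant condition $4\tilde A C\geq B^2$ used in the paper --- and testing along $\phi=m+\lambda u_0$ with a minimizer $u_0$ of $Q_N$ supplied by Proposition \ref{prop2} for part 3. The only small inaccuracy is in your closing remark: the restriction $N<6$ in Proposition \ref{prop2} comes from the compactness of the embedding $H^1(\mathbb{T}^N)\subset L^3(\mathbb{T}^N)$, needed to pass the normalization $\int_{\mathbb{T}^N}u^3\,dx=1$ to the limit, while the quartic factor is handled by Fatou's lemma alone, so no strong $L^4$ convergence of the minimizing sequence is required.
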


\section{$\Gamma$-convergence result. Proof of Theorem \ref{TheoremGammaLimit} and Lemma~\ref{lem:coercive}. }

\begin{proof}[Proof of Lemma \ref{lem:coercive}]
We develop the first integrant in $F_{L,h}$:
\begin{align*}
(L^2\alpha \f+\Delta' \f +\frac1{h^2}\partial_{33}\f)^2&=(L^2\alpha \f+\Delta' \f)^2 +\frac1{h^4}(\partial_{33}\f)^2\\
&\quad \quad +\frac{2L^2\alpha}{h^2}\f \partial_{33}\f+\frac2{h^2} \Delta' \f  \partial_{33}\f, \quad \textrm{for } \f \in V.
\end{align*}
\nd {\it Step 1. Integrating the crossing terms}. 
Using \eqref{null_flux}, integration by parts leads
$$\int_\Omega \f \partial_{33}\f \, dx =-\int_\Omega (\partial_{3}\f)^2 \, dx.$$
Using the periodicity of $\f$ in $x'=(x_1, x_2)$ and \eqref{null_flux}, integration by parts applied first in $x_3$ direction and then in $x_j$-direction for $j=1,2$ yields
\footnote{This computation is carried out for $\f$ smooth in $V$ and the result follows for general $\f\in V$ by a standard density argument.}
\beq
\label{crossing}
\int_\Omega \partial_{jj}\f \partial_{33}\f \, dx = \int_\Omega (\partial_{j3}\f)^2 dx, \quad j=1,2,
\eeq
so that
$$\int_\Omega \Delta' \f  \partial_{33}\f\, dx\geq 0.$$

\nd {\it Step 2. We prove that}
$$\int_\Omega (L^2\alpha \f+\Delta' \f)^2 \, dx\geq  L^4\alpha^2 m^2+\inf_{k\in 2\pi \Z^2, k\neq 0} \big(\frac{L^2\alpha}{|k|^2}-1\big)^2 \int_\Omega (\Delta' \f)^2\, dx.$$
Indeed, as $\f\in H^2(\Omega)$ is $1$-periodic in $x'$-variables, the Fourier series expansion of $\f$ writes
\beq
\label{Fourier}
\f(x)= a_0(x_3) + \sum_{k\in 2\pi \Z^2, k\neq 0} \big(a_{k}(x_3)\cos(k\cdot x')+b_{k}(x_3)\sin(k\cdot x')\big), \quad x=(x',x_3)\in \Omega,
\eeq
where $a_0, a_k, b_k\in H^2((0,1))$ for every $k\in 2\pi \Z^2, k\neq 0$ and $\cdot$ denotes the scalar product in $\R^2$. One computes that  
\begin{align*}
&L^2\alpha \f(x)+\Delta' \f(x)=L^2\alpha a_0(x_3)\\
&\quad \quad +\sum_{k\in 2\pi \Z^2, k\neq 0}(L^2\alpha-|k|^2)
\big(a_{k}(x_3)\cos(k\cdot x')+b_{k}(x_3)\sin(k\cdot x')\big), \quad \textrm{for a.e. }  x\in \Omega.
\end{align*}
Then Plancherel's identity and Jensen's inequality yield
\begin{align*}
\int_\Omega (L^2 \alpha \f+\Delta' \f)^2\, dx&=L^4\alpha^2 \int_0^1 a_0^2\, dx_3\\
&\quad + 
\frac{1}{2}\sum_{k\in 2\pi \Z^2, k\neq 0}(L^2\alpha-|k|^2)^2 \int_0^1 (a_{k}^2+b_{k}^2)\, dx_3\\
&\geq  L^4\alpha^2 \big(\int_0^1 a_0\, dx_3\big)^2+\inf_{k\in 2\pi \Z^2, k\neq 0} \big(\frac{L^2\alpha}{|k|^2}-1\big)^2 \int_\Omega (\Delta' \f)^2\, dx,
\end{align*}
which proves the desired inequality since $\int_0^1 a_0\, dx_3=\int_\Omega\f\, dx=m$. Note that if $\alpha<0$, the above infimum equals $1$ (and is not achieved by
any $k\in 2\pi \Z^2$); if $\alpha\geq 0$, the above infimum is achieved for some $k_L\in 2\pi \Z^2\setminus \{0\}$.

\medskip

\nd {\it Step 3. Conclusion.}
We recall the Poincar\'e inequality for $H^1_0$ functions on the interval $(0,1)$:
\beq
\label{poinc}
\int_0^1 \big(\frac{d}{ds}u\big)^2\, ds\geq \pi^2 \int_0^1 u^2\, ds, \quad \textrm{for every } u\in H^1_0((0,1)).
\eeq
Applying it for $\partial_3 \f(x_1,x_2,.)\in H^1_0((0,1))$ for a.e. $(x_1,x_2)\in [0,1)^2$, we deduce that
$$\int_\Omega \bigg(\frac1{h^4}(\partial_{33}\f)^2-\frac{2L^2\alpha}{h^2} (\partial_{3}\f)^2 \bigg) \, dx
\geq \frac1{2h^4}\int_\Omega (\partial_{33}\f)^2 \, dx+\frac{\pi^2-4h^2L^2\alpha}{2h^4} \int_\Omega (\partial_{3}\f)^2 \, dx.$$
Combined with Steps 1 and 2, we conclude that there exist $C, \eps, h_0>0$ (all depending on $\alpha$) such that for every $L\in (1-\eps, 1+\eps)$ and every $h\in (0, h_0)$
we have $\inf_{k\in 2\pi \Z^2, k\neq 0} \big(\frac{L^2\alpha}{|k|^2}-1\big)^2\geq C>0$ (thanks to \eqref{cond:alpha}) and 
\beq
\label{ineg_c}
F_{L,h}(\f)\geq C\int_\Omega \bigg((\Delta'\f)^2+(\partial_{33}\f)^2+\frac{1}{h^4}(\partial_{3}\f)^2 \bigg)\, dx+ \inf W.
\eeq
Moreover, by \eqref{Fourier}, we deduce that $\|\Delta'\f\|_{L^2(\Omega)}\geq \|\nabla'\f\|_{L^2(\Omega)}$ with $\nabla'=(\partial_1, \partial_2)$ and by \eqref{poinc},
$\|\partial_{33}\f\|_{L^2(\Omega)}\geq \pi 
\|\partial_3\f\|_{L^2(\Omega)}$; the conclusion then follows by \eqref{crossing} and the Poincar\'e-Wirtinger inequality  
$\|\nabla\f\|_{L^2(\Omega)}\geq C  \|\f-m\|_{L^2(\Omega)}$.
\end{proof}

\bigskip

\begin{Remark}
\label{rem:coercive}
1. Note that $H^2({\cal D})$ is not in general the whole space of finite energy configurations of the functional $\cal F$ in \eqref{functio}. Indeed, if $\alpha=|k_0|^2$ for some $k_0\in 
\frac{2\pi}{L}\Z^2\setminus \{0\}$, then by setting $\Phi_\lambda(y)=m+\lambda \sin(k_0\cdot y')$ for $y=(y', y_3)\in {\cal D}$, we have that $\alpha \Phi_\lambda+\Delta \Phi_\lambda=\alpha m$.
If $W\equiv 0$, it implies that 
${\cal F}(\Phi_\lambda)$ fails to bound $\|\Delta \Phi_\lambda\|^2_{L^2(\cal D)}$ as $\lambda\to \infty$.

\medskip

2. The coercivity result in Lemma \ref{lem:coercive} holds for more general continuous potentials $W$ for which there exist two positive constants $C_\alpha, c>0$ such that
$$W(t)\geq -C_\alpha t^2-c, \quad \textrm{ for all } t \in \R$$
(in particular, \eqref{liminf} could fail).  
The constant $C_\alpha>0$ (depending on $\alpha$) needs to satisfy the following bound
$$C_\alpha<2\pi^2 \inf_{k\in 2\pi \Z^2, k\neq 0} \big(\frac{\alpha}{|k|^2}-1\big)^2$$
and $\alpha \in \R$ is such that \eqref{cond:alpha} holds true. 
Indeed, this follows by the proof of Lemma \ref{lem:coercive} combined with 

$\bullet$ the Poincar\'e inequality
$$\int_{\Omega} (\Delta' \f)^2\, dx\geq 4\pi^2  \int_{\Omega} (\f-a_0)^2\, dx=4\pi^2 \int_{\Omega} \f^2\, dx-4\pi^2 \int_0^1 a_0(x_3)^2\, dx_3, \quad \textrm{for all }\f \in V,$$
which follows by the Fourier expansion \eqref{Fourier} with $a_0(x_3)=\int_{[0,1)^2} \f(x', x_3)\, dx'$ for every $x_3\in (0,1)$;

$\bullet$ and the following inequalities
$$\int_\Omega (\partial_3 \f)^2\, dx\geq \int_0^1 (\partial_3 a_0)^2\, dx_3 \geq 4\pi^2 \int_0^1 (a_0-m)^2\, dx_3=4\pi^2 \bigg(\int_0^1 a_0^2\, dx_3-m^2\bigg),$$
(where we used the Jensen and Poincar\'e-Wirtinger inequalities for $\int_0^1 a_0\, dx_3=m$).
 \end{Remark}

\bigskip

Now we prove the $\Gamma$-convergence result in Theorem \ref{TheoremGammaLimit}.

\begin{proof}[Proof of Theorem \ref{TheoremGammaLimit}]
We divide the proof in several steps:

\medskip

\nd {\it Step 1. Proof of point A. (Compactness).} By Lemma \ref{lem:coercive}, we know that $(\f_n)$ is bounded in $H^2(\Omega)$; therefore, up to a subsequence,
$\f_n\rightharpoonup \f_*$ weakly in $H^2(\Omega)$. As $H^2(\Omega)$ is compactly embedded in $H^1(\Omega)$ and in $L^1(\Omega)$, we deduce that the mass constraint passes to the limit 
(i.e., $\int_\Omega \f_*\, dx=m$) as well as $\partial_3 \f_n\to \partial_3 \f_*$ in $L^2(\Omega)$ (up to a subsequence). Moreover, by Lemma \ref{lem:coercive}, we know that 
$\| \partial_3 \f_n\|_{L^2(\Omega)}\to 0$; therefore, we deduce that $\partial_3 \f_*=0$ in $\Omega$. We conclude that $\f_*\in V_*$.   

\medskip

\nd {\it Step 2. Proof of point B. (Lower bound).} 
Since $\f_n\rightharpoonup \f_*$ weakly in $H^2(\Omega)$ which is compactly embedded in $L^\infty(\Omega)$, we know that up to a subsequence, $\f_n\to \f_*$ uniformly in $\Omega$.
In particular, $W(\f_n)\to W(\f_*)$ uniformly in $\Omega$ (because $W$ is continuous). As in the proof of Lemma \ref{lem:coercive} (see Step 1), we write
\begin{align*}
\int_\Omega (L_n^2\alpha \f_n&+\Delta' \f_n +\frac1{h_n^2}\partial_{33}\f_n)^2\, dx=
\int_\Omega (L_n^2\alpha \f_n+\Delta' \f_n)^2 \, dx\\
&+\frac1{h_n^4}\int_\Omega \bigg((\partial_{33}\f_n)^2-2h_n^2 L_n^2\alpha (\partial_{3}\f_n)^2\bigg)\, dx+
\frac2{h_n^2}\int_\Omega |\nabla' \partial_3 \f_n|^2 \, dx,
\end{align*}
where $\nabla'=(\partial_1, \partial_2)$. 
Using the Poincar\'e inequality \eqref{poinc}, we know that for $n$ large the last two integrals are nonnegative (because $2h_n^2 L_n^2\alpha\to 0$,
so it is less than the constant $\pi^2$ in \eqref{poinc} as $n\to \infty$). Since
$L_n^2\alpha \f_n+\Delta' \f_n \rightharpoonup \alpha \f_*+\Delta' \f_*$ weakly in $L^2(\Omega)$, the lower semicontinuity of
$\|\cdot\|_{L^2(\Omega)}^2$ yields the conclusion.  

\medskip

\nd {\it Step 3. Proof of point C. (Upper bound).} 
Let $\f_*\in V$. We set $\f_n:=\f_*$. If $\f_*\notin V_*$ (i.e., $\|\partial_3 \f_*\|_{L^2(\Omega)}\neq 0$), then 
$F_{L_n, h_n}(\f_*)\to \infty$ (by Lemma \ref{lem:coercive}). Otherwise, $\f_*\in V_*$ and 
$$F_{L_n,h_n}(\f_n)=\int_\Omega \bigg( \frac{1}{2L_n^4} (L_n^2\alpha \f_* + \Delta' \f_*+ \frac{1}{h_n^2} \underbrace{\partial_{33}\f_*}_{=0})^2+ W(\f_*)\bigg)\, dx
\rightarrow F_*(\f_*)$$
by dominated convergence theorem (as $\f_*\in V_*\subset H^2(\Omega)$).
\end{proof}

\section{Optimality of the uniform state. Proof of Theorems \ref{the:optimal} and \ref{the:optimal_OK}.}
\label{sec:optima}

\subsection{The case of the PFC model.}

In this section, we give necessary and sufficient conditions on the parameter $\alpha$ and on the potential $W$ that guarantee the global minimality of the constant state $m$
for the $\Gamma$-limit $F_*$ over the set $V_*$. In fact, we will work in the general context of  the $N$-dimensional torus  $$\mathbb{T}^N=[0,1)^N$$ with $N\geq 1$ and the set 
of periodic configurations $\phi$ of average $m\in \R$ : 
$$H_m^2(\mathbb{T}^N)=\{\phi\in H^2(\mathbb{T}^N)\, :\, \int_{\mathbb{T}^N} \phi \, dx=m\}.$$
The corresponding functional is
$$
{\cal F}(\phi)= \int_{\mathbb{T}^N} \bigg(\frac{1}{2}(\alpha\phi+\Delta \phi)^2
+ W(\phi)\bigg) \, dx, \quad \phi\in H_m^2(\mathbb{T}^N),
$$
where $W$ is a $C^2$ potential, $\alpha$ is a constant parameter and $\Delta$ is the Laplacian operator in $\R^N$.
For the fixed constants $\alpha\in \R$ and $\frac{d^2 W}{d \phi^2}(m)\in \R$, we denote 
\begin{align}
\label{cn}
P_N&:=\inf\left\{ \int_{\mathbb{T}^N} \bigg((\alpha u+\Delta u)^2
+ \frac{d^2 W}{d \phi^2}(m) u^2 \bigg) \, dx  \int_{\mathbb{T}^N} u^4\, dx \, : \right.\\
\nonumber
&\hspace{5cm}\, \left. u:\mathbb{T}^N\to \R, \, \int_{\mathbb{T}^N} u^3\, dx=1, \, \int_{\mathbb{T}^N} u\, dx=0  \right\}.
\end{align}

We start by proving the following result that relates the optimal constant $P_N$ in \eqref{cn} with the condition of stability of the uniform state $\phi_*=m$ (that is \eqref{cond_min_N} below).
We also give a sufficient condition in order that the infimum in $P_N$ is achieved in \eqref{cn}.

\begin{Proposition}\label{prop} 
Let $\alpha\in \R$ and $\frac{d^2 W}{d \phi^2}(m)\in \R$ be fixed.

\nd 1. If 
\beq
\label{cond_min_N}
\frac{d^2 W}{d \phi^2}(m)+\min_{k\in 2\pi \Z^N, k\neq 0} (\alpha-|k|^2)^2\geq 0,
\eeq
then 
\beq
\label{ineg_pn}
P_N\geq \frac{d^2 W}{d \phi^2}(m)+\min_{k\in 2\pi \Z^N, k\neq 0} (\alpha-|k|^2)^2.
\eeq

\nd 2. If the inequality in \eqref{cond_min_N} is strict, then the infimum in \eqref{cn} is achieved provided that $N<12$. Moreover, the inequality in \eqref{ineg_pn} is strict, i.e., 
$$
P_N>\frac{d^2 W}{d \phi^2}(m)+\min_{k\in 2\pi \Z^N, k\neq 0} (\alpha-|k|^2)^2.
$$

\end{Proposition}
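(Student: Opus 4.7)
The plan is to establish Part 1 by combining Parseval's identity with the Cauchy--Schwarz inequality, and Part 2 by applying the direct method---where scale-invariance and the critical Sobolev threshold $N<12$ play essential roles---followed by a rigidity argument ruling out equality in Part 1.

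For an admissible $u$ (mean zero, $\int_{\mathbb{T}^N}u^3\,dx=1$), a Fourier expansion together with Parseval gives
\[
\int_{\mathbb{T}^N}(\alpha u+\Delta u)^2\,dx=\sum_{k\in 2\pi\Z^N\setminus\{0\}}(\alpha-|k|^2)^2|\hat u(k)|^2\geq \mu\int_{\mathbb{T}^N}u^2\,dx,
\]
where $\mu:=\min_{k\in 2\pi\Z^N\setminus\{0\}}(\alpha-|k|^2)^2$. Assumption \eqref{cond_min_N} ensures $\mu+\frac{d^2W}{d\phi^2}(m)\geq 0$, so the first integrand in the definition of $P_N$ is bounded below by $\bigl(\mu+\frac{d^2W}{d\phi^2}(m)\bigr)\int u^2$. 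Combined with the Cauchy--Schwarz bound $1=(\int u\cdot u^2)^2\leq \int u^2\int u^4$, this yields $P_N\geq \mu+\frac{d^2W}{d\phi^2}(m)$, proving Part 1.

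For Part 2, assume that \eqref{cond_min_N} is strict. Setting $Q(u):=\int[(\alpha u+\Delta u)^2+\frac{d^2W}{d\phi^2}(m)\,u^2]\,dx$, the functional $F(u):=Q(u)\int u^4\,dx/(\int u^3)^2$ is invariant under $u\mapsto\lambda u$, so I normalize a minimizing sequence $(u_n)$ by $\|u_n\|_{L^2(\mathbb{T}^N)}=1$. Since $(\alpha-|k|^2)^2+\frac{d^2W}{d\phi^2}(m)>0$ for every $k\in 2\pi\Z^N\setminus\{0\}$ and grows like $|k|^4$, Parseval yields the coercivity estimate $Q(u)\geq c\|u\|_{H^2}^2$ on the mean-zero subspace, with some $c>0$. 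The elementary bound $(\int u_n^3)^2\leq \int u_n^4$ then gives $Q(u_n)\leq F(u_n)$, so $(u_n)$ is bounded in $H^2(\mathbb{T}^N)$. The embedding $H^2\hookrightarrow L^3$ is compact precisely when $N<12$, so along a subsequence $u_n\to u_*$ strongly in $L^3$ (hence a.e.) and weakly in $H^2$; the constraints $\int u_*=0$ and $\|u_*\|_{L^2}=1$ pass to the limit. Moreover $\int u_*^3\neq 0$: otherwise $Q(u_n)\geq c>0$ combined with $\int u_n^4\geq 1$ (Jensen on the unit-mass torus) would force $F(u_n)\to+\infty$. Weak $H^2$-lower-semicontinuity of $Q$ and Fatou's lemma applied to $|u_n|^4$ then give $F(u_*)\leq P_N$, so after rescaling $u_*$ to have $\int u_*^3=1$, it achieves the infimum.

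To obtain the strict inequality in \eqref{ineg_pn}, suppose $P_N=\mu+\frac{d^2W}{d\phi^2}(m)$ and let $u_*$ be the minimizer just constructed. Equality throughout Part 1 forces in particular equality in the Cauchy--Schwarz step $(\int u_*^3)^2=\int u_*^2\int u_*^4$, which requires $u_*$ and $u_*^2$ to be pointwise proportional. Hence $u_*$ takes at most two values $\{0,c\}$ a.e., and the mean-zero constraint $\int u_*=0$ then forces $u_*\equiv 0$ a.e., contradicting $\int u_*^3=1$. The main obstacle is the compactness step: the stated threshold $N<12$ is precisely that at which $H^2$ embeds compactly into $L^3$, which is what allows the cubic constraint---and with it the nontriviality of the limit---to survive the weak limit.
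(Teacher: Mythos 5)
Your proof is correct and follows essentially the same route as the paper: the Plancherel lower bound plus a H\"older/Cauchy--Schwarz interpolation for part 1, the direct method with the compact embedding $H^2(\mathbb{T}^N)\hookrightarrow L^3(\mathbb{T}^N)$ for $N<12$ for part 2, and a rigidity argument from the equality case to get strictness. The only (harmless) variations are that you normalize in $L^2$ via scale invariance and must therefore verify $\int u_*^3\neq 0$ separately, and that your rigidity step exploits the equality case of Cauchy--Schwarz (forcing $u_*$ to be two-valued) where the paper instead uses $\int|u|^3=\int u^3$ to force $u\geq 0$; both correctly reduce to $u_*\equiv 0$ via the zero-average constraint.
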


\begin{proof}[Proof of Proposition \ref{prop}] Assume that \eqref{cond_min_N} holds true. We divide the proof in several steps:
\medskip

\nd {\it Step 1. Proof of \eqref{ineg_pn}}. For  $u \in H^2({\mathbb{T}^N})$ of zero average on $\T$, we write the following Fourier series expansion  
$$
u(x)=\sum_{k\in 2\pi \Z^N, k\neq 0} \big(a_{k}\cos(k\cdot x))+b_{k}\sin(k\cdot x)\big), \quad x\in \mathbb{T}^N,
$$
where $a_k, b_k\in \R$ for  $k\in 2\pi \Z^N\setminus\{0\}$ and $\cdot$ is the scalar product in $\R^N$.
By Plancherel's identity, we have
\begin{align}
\nonumber
\int_{\mathbb{T}^N} (\alpha u+\Delta u)^2 + \frac{d^2 W}{d \phi^2}(m) u^2 \, dx&= \frac{1}{2}\sum_{k\in 2\pi \Z^N\setminus\{0\}} \bigg((\alpha -|k|^2)^2+\frac{d^2 W}{d \phi^2}(m) \bigg)(a_{k}^2+b_{k}^2)\\
\label{interm1}
&\geq \bigg(\frac{d^2 W}{d \phi^2}(m)+\min_{k\in 2\pi \Z^N, k\neq 0} (\alpha-|k|^2)^2\bigg) \int_{\T} u^2\, dx,
\end{align}
which is a nonnegative quantity thanks to \eqref{cond_min_N}. By the H\"older inequality
\beq
\label{interm2}
\int_{\T} u^4\, dx\int_{\T} u^2\, dx\geq \bigg(\int_{\T} |u|^3\, dx\bigg)^{2}\geq \bigg(\int_{\T} u^3\, dx\bigg)^{2}.
\eeq
Therefore, one deduces the conclusion in point 1.

\medskip

For the rest of the proof, we assume that the inequality in \eqref{cond_min_N} is strict.
\medskip

\nd {\it Step 2. Every minimizing sequence in \eqref{cn} is bounded in $H^2(\mathbb{T}^N)$}. 
{Indeed,} let $(u_n)_n$ be a minimizing sequence in \eqref{cn} with 
$$\int_{\mathbb{T}^N} u_n^3\, dx=1, \, \int_{\mathbb{T}^N} u_n\, dx=0,$$ 
i.e., 
$$\int_{\mathbb{T}^N} \bigg((\alpha u_n+\Delta u_n)^2
+ \frac{d^2 W}{d \phi^2}(m) u_n^2 \bigg) \, dx  \int_{\mathbb{T}^N} u_n^4\, dx \to P_N, \quad \textrm{as } n\to \infty.$$
In particular, the above left-hand side is uniformly bounded (from above). Moreover, by H\"older's inequality, we have that 
$$\int_{\T} u_n^4\, dx\geq \bigg(\int_{\T} |u_n|^3\, dx\bigg)^{4/3}\geq 1.$$ 
As by \eqref{interm1} and the strict inequality in \eqref{cond_min_N} we already know that
$$\int_{\mathbb{T}^N} \bigg((\alpha u_n+\Delta u_n)^2
+ \frac{d^2 W}{d \phi^2}(m) u_n^2 \bigg) \, dx$$ is positive, we conclude that the above quantity is uniformly bounded from above in $n$. Combined again with \eqref{interm1} and the strict inequality 
in \eqref{cond_min_N},  we deduce that $(u_n)_n$ is bounded in $L^2(\T)$. Therefore, $(\alpha u_n+\Delta u_n)_n$ is bounded in $L^2(\T)$, yielding $(\Delta u_n)_n$ is bounded in $L^2(\T)$ and we conclude that 
$(u_n)$ is bounded in $H^2(\T)$ since
$$
\|\Delta u_n\|_{L^2(\T)}\geq C\|u_n\|_{H^2(\T)},
$$
for a universal constant $C>0$, for every zero-average periodic function $u_n$.
\medskip

\nd {\it Step 3. Existence of a minimizer in \eqref{cn}}. As $(u_n)$ is bounded in $H^2(\T)$, we know that up to a subsequence, $(u_n)_n$ converges to a function $u\in H^2(\T)$ weakly in $H^2$, a.e. in $\T$ and strongly in $L^p$ for $p\in [1,3]$ (by the Sobolev compact embedding $H^2(\T)\subset L^3(\T)$ provided that $N<12$). We conclude that $u$ has zero average, $\|u\|_{L^3}=1$, $\liminf_{n\to \infty} \int_{\T} u_n^4\, dx\geq \int_{\T} u^4\, dx$ (by Fatou's lemma) and
$$\liminf_{n\to \infty} \int_{\mathbb{T}^N} \bigg((\alpha u_n+\Delta u_n)^2
+ \frac{d^2 W}{d \phi^2}(m) u_n^2 \bigg) \, dx \geq \int_{\mathbb{T}^N} \bigg((\alpha u+\Delta u)^2
+ \frac{d^2 W}{d \phi^2}(m) u^2 \bigg) \, dx
$$
as $u_n\to u$ in $L^2$, $\alpha u_n+\Delta u_n$ converges weakly in $L^2$ to $\alpha u+\Delta u$ and the $L^2$-norm is weakly lower semicontinuous. Thus, $u$ is a minimizer in \eqref{cn}. 

\medskip

\nd {\it Step 4. Proof of the strict inequality in \eqref{ineg_pn}}. Assume by contradiction that the equality holds in 
\eqref{ineg_pn}. By Step 3, \eqref{cn} has a nonvanishing minimizer $u$ (as $\|u\|_{L^3}=1$), so that the above assumption would imply 
$$\int_{\mathbb{T}^N} \bigg((\alpha u+\Delta u)^2
+ \frac{d^2 W}{d \phi^2}(m) u^2 \bigg) \, dx  \int_{\mathbb{T}^N} u^4\, dx\\
=\frac{d^2 W}{d \phi^2}(m)+\min_{k\in 2\pi \Z^N, k\neq 0} (\alpha-|k|^2)^2.$$
By Step 1, all the inequalities in \eqref{interm1} and \eqref{interm2} become equalities. In particular,  
$$\int_{\T} |u|^3\, dx= \int_{\T} u^3\, dx=1,$$
i.e., $u\geq 0$ a.e. in $\T$. As $u$ has vanishing average, it means that $u=0$ a.e. in $\T$ which contradicts the hypothesis
$\|u\|_{L^3}=1$. 
\end{proof}

Remark that \footnote{One inequality comes from \eqref{interm2}. To prove that $1$ is indeed the infimum in \eqref{eqal11}, it is enough to consider the case of dimension $N=1$: for every $n\geq 1$, let 
$v_n=n$ in $(0,\frac1n)$ and $v_n=-\frac{n}{n-1}$ in $(\frac1n,1)$. Then the sequence $$u_n=(\int_{\mathbb{T}}v_n^3)^{-\frac13}v_n$$ is a minimizing sequence in \eqref{eqal11} yielding the value $1$ for the infimum.}
\beq
\label{eqal11}
\inf\left\{ \int_{\mathbb{T}^N} u^2 \, dx  \int_{\mathbb{T}^N} u^4\, dx \, : \, 
\int_{\mathbb{T}^N} u^3\, dx=1, \, \int_{\mathbb{T}^N} u\, dx=0  \right\}=1.
\eeq
Therefore, without the hypothesis at point 2. (implying in particular, that $P_N$ is achieved for $N<12$),
it is not clear how to conclude that the inequality \eqref{ineg_pn} is strict in general. Moreover, it may happen that if the equality holds in \eqref{cond_min_N}, then $P_N=0$. Indeed, already in dimension $N=1$, if we set $\alpha=10\pi^2$, $\frac{d^2 W}{d \phi^2}(m)=-(\alpha-4\pi^2)^2$ (so, the equality holds in \eqref{cond_min_N}) and $v(x)=\cos(2\pi x)+\cos(4\pi x)$, by normalizing $v$ as
$$u=(\int_{\mathbb{T}}v^3)^{-\frac13}v,$$
we obtain that $u$ and $(\alpha u+\frac{d^2}{dx^2}u)^2+\frac{d^2 W}{d \phi^2}(m) u^2$ have zero average and 
$u^3$ has average $1$; this yields that $P_N=0$.

\medskip

We will prove now the main result which is a generalization of Theorem \ref{the:optimal}:

\begin{Theorem}\label{gen:optimal}
Let $m, \alpha \in \R$ and $W\in C^2(\R)$. 

1. The uniform state $\phi_*=m$ is a stable critical point of $\cal F$ over $H_m^2(\mathbb{T}^N)$ if and only if \eqref{cond_min_N} holds true.

\medskip

2.  Assume that $W\in C^4(\R)$ satisfies $\frac{d^4 W}{d \phi^4}\geq w^2$ in $\R$ for some constant $w>0$. Then $m$ is a global minimizer of $\cal F$ over $H_m^2(\mathbb{T}^N)$ if \eqref{cond_min_N} holds true and
\beq
\label{cond_optima}
P_N\geq \frac1{3w^2}\left(\frac{d^3 W}{d \phi^3}(m) \right)^2.
\eeq
Moreover, if the inequality in \eqref{cond_optima} is strict, then $m$ is the unique global minimizer of $\cal F$ over $H_m^2(\mathbb{T}^N)$.

\medskip

3. Assume that $W\in C^4(\R)$ satisfies $\frac{d^4 W}{d \phi^4}=w^2$ in $\R$ for some constant $w>0$. 
If $N<12$ and the inequality in \eqref{cond_min_N} is strict, then $m$ is {\bf not} a global minimizer of $\cal F$ over $H_m^2(\mathbb{T}^N)$ provided that
\eqref{cond_optima} fails to be true.
\end{Theorem}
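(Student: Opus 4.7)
I would start from the identity that emerges once one writes any competitor as $\phi = m + u$ with $\int_{\mathbb{T}^N} u\, dx = 0$: since $\int u = \int \Delta u = 0$, the cross terms in $(\alpha(m+u) + \Delta u)^2$ vanish upon integration, and similarly $\int W'(m) u\, dx = 0$, yielding
\[
\mathcal{F}(m+u) - \mathcal{F}(m) = \tfrac12 \int_{\mathbb{T}^N} (\alpha u + \Delta u)^2 \, dx + \int_{\mathbb{T}^N} \bigl[W(m+u) - W(m) - W'(m) u\bigr] dx.
\]
This decomposition will be the backbone of all three parts.

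For Part 1, I will Taylor-expand $W$ to second order and read off the second variation as $\int (\alpha u + \Delta u)^2 + W''(m) u^2\, dx$; expressing $u$ in a Fourier series as in Proposition~\ref{prop} turns this into $\tfrac12 \sum_{k\neq 0} \bigl((\alpha-|k|^2)^2 + W''(m)\bigr)(a_k^2 + b_k^2)$. Nonnegativity on all zero-mean $u$ is then equivalent to \eqref{cond_min_N}, with necessity obtained by testing a single Fourier mode indexed by a $k$ minimizing $(\alpha-|k|^2)^2$.

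For Part 2, the hypothesis $W^{(4)} \geq w^2$ combined with Taylor's formula with integral remainder gives the pointwise lower bound $W(m+u)-W(m)-W'(m) u \geq \tfrac12 W''(m) u^2 + \tfrac16 W'''(m) u^3 + \tfrac{w^2}{24} u^4$. Writing $A = \int[(\alpha u + \Delta u)^2 + W''(m) u^2]\, dx$, $B = \int u^4\, dx$, $C = \int u^3\, dx$, this produces $\mathcal{F}(m+u) - \mathcal{F}(m) \geq \tfrac{A}{2} + \tfrac{W'''(m)}{6} C + \tfrac{w^2 B}{24}$. A homogeneity rescaling $u \mapsto u/|C|^{1/3}$ plugged into the definition \eqref{cn} of $P_N$ delivers $AB \geq P_N C^2$ whenever $C \neq 0$; coupling this with the AM-GM estimate $\tfrac{A}{2} + \tfrac{w^2 B}{24} \geq \tfrac{w}{2\sqrt{3}} \sqrt{AB}$ yields
\[
\mathcal{F}(m+u) - \mathcal{F}(m) \geq \tfrac{|C|}{6}\bigl(w\sqrt{3 P_N} - |W'''(m)|\bigr),
\]
which is $\geq 0$ under \eqref{cond_optima}. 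When $C = 0$ the surviving bound $\tfrac{A}{2} + \tfrac{w^2 B}{24}$ is strictly positive for $u \not\equiv 0$ (because $B > 0$ and $A \geq 0$ by \eqref{cond_min_N}), which simultaneously delivers uniqueness under strict \eqref{cond_optima}.

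For Part 3, the rigidity $W^{(4)} \equiv w^2$ turns the Taylor bound into an equality, so the Part 2 argument becomes sharp. Proposition~\ref{prop}, applicable because $N < 12$ gives the compact Sobolev embedding $H^2 \subset L^3$, produces a true minimizer $u^* \in H^2(\mathbb{T}^N)$ with $\int u^* = 0$, $\int (u^*)^3 = 1$, and $A^* B^* = P_N$. Testing along $\phi = m + t u^*$ gives the exact quartic
\[
g(t) = \tfrac{A^*}{2} t^2 + \tfrac{W'''(m)}{6} t^3 + \tfrac{w^2 B^*}{24} t^4 = t^2\Bigl[\tfrac{A^*}{2} + \tfrac{W'''(m)}{6} t + \tfrac{w^2 B^*}{24} t^2\Bigr].
\]
The bracketed upward parabola has discriminant $\tfrac{W'''(m)^2}{36} - \tfrac{w^2 P_N}{12}$, strictly positive \emph{precisely} when \eqref{cond_optima} fails; hence $g(t) < 0$ on an open interval, showing that $m$ is not a global minimizer. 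The hard part is Part 3: it depends on actually \emph{attaining} the infimum $P_N$ rather than merely bounding it below, so the rigidity $W^{(4)}\equiv w^2$ and the Sobolev range $N < 12$ both enter essentially, and the $|C|$-linear estimate from Part 2 must be matched exactly by the quartic behaviour of $g$ along $u^*$ to close the converse.
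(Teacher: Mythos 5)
Your proposal is correct and follows essentially the same route as the paper: the same exact decomposition $\mathcal{F}(m+u)-\mathcal{F}(m)$ with the cross terms killed by $\int u\,dx=0$, the same Taylor lower bound from $W^{(4)}\geq w^2$, the same rescaling $u\mapsto u/(\int u^3)^{1/3}$ to invoke $P_N$, and the same attained-minimizer quartic for Part 3 via Proposition~\ref{prop}. The only cosmetic difference is that you close Part 2 with the AM--GM inequality $\tfrac{A}{2}+\tfrac{w^2B}{24}\geq \tfrac{w}{2\sqrt{3}}\sqrt{AB}$ where the paper phrases the same estimate as nonpositivity of the discriminant of the quadratic $A+Bt+Ct^2$ evaluated at $t=1$.
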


\begin{proof}[Proof of Theorem \ref{gen:optimal}] We divide the proof in several steps:

 \medskip

\nd {\it Step 1. A Fourier expansion}. For  $\phi \in H_m^2({\mathbb{T}^N})$, we write the following Fourier series expansion  
$$
\phi(x)=m + \sum_{k\in 2\pi \Z^N, k\neq 0} \big(a_{k}\cos(k\cdot x))+b_{k}\sin(k\cdot x)\big), \quad x\in \mathbb{T}^N,
$$
where $a_k, b_k\in \R$ for  $k\in 2\pi \Z^N\setminus\{0\}$.
By Plancherel's identity, we have
\beq
\label{star}
\int_{\mathbb{T}^N} (\alpha \phi+\Delta \phi)^2 \,dx= \alpha^2m^2 + \frac{1}{2}\sum_{k\in 2\pi \Z^N\setminus\{0\}} (\alpha -|k|^2)^2 (a_{k}^2+b_{k}^2).
\eeq

 \medskip

\nd {\it Step 2. Proof of 1.} 
First, note that $\phi_*=m$ is indeed a critical point of $\cal F$ over $H_m^2(\mathbb{T}^N)$, i.e., $\phi_*=m$ satisfies the Euler-Lagrange equation
$$
\Delta^2\phi_*+2\alpha \Delta \phi_*+\alpha^2\phi_*+\frac{d W}{d \phi}(\phi_*)=\alpha^2 m+\int_{\mathbb{T}^N} \frac{d W}{d \phi}(\phi_*)\, dx.
$$
Then we compute the second variation of $\cal F$ at $\phi_*$ over $H_m^2(\mathbb{T}^N)$: for every test configuration $u \in H^2(\mathbb{T}^N)$ with $\int_{\mathbb{T}^N} u\, dx =0$,
 \begin{align*}
 \nabla^2 {\cal F}(\phi_*)(u, u) &= \frac{d^2}{dt^2}\bigg|_{t=0}{\cal F}(\phi_*+tu)\\
 &= \int_{\mathbb{T}^N} (\alpha u+\Delta u)^2 +\frac{d^2 W}{d \phi^2}(\phi_*)u^2\, dx.
 \end{align*}
 By Step 1, we deduce that
 $$\nabla^2 {\cal F}(\phi_*)(u, u) \geq \left(\min_{k\in 2\pi \Z^N, k\neq 0} (\alpha-|k|^2)^2+\frac{d^2 W}{d \phi^2}(m)\right) \int_{\mathbb{T}^N} u^2\, dx.$$
 Therefore, if \eqref{cond_min_N} holds true, then $\phi_*=m$ is a stable point of $\cal F$ over $H_m^2(\mathbb{T}^N)$. Conversely, if 
  \eqref{cond_min_N} fails to be true, set $k_0 \in 2\pi \Z^N\setminus\{0\}$ be a minimum of $\min_{k\in 2\pi \Z^N, k\neq 0} (\alpha- |k|^2)^2$ and
choosing the test function $u(x)=\sin (k_0\cdot x)$, we obtain that
 $$
 \nabla^2 {\cal F}(\phi_*)(u, u)= \frac{1}{2}(\alpha- |k_0|^2)^2+\frac{1}{2} \frac{d^2 W}{d \phi^2}(m)< 0,
 $$
which proves the instability of $\phi_*$.

\medskip

\nd {\it Step 3. If  $W\in C^4(\R)$ satisfies $\frac{d^4 W}{d \phi^4}\geq w^2$ in $\R$ for some $w>0$, then for every $\phi\in H^2_m(\T)$,
\begin{align}
\label{bound_pot}
\int_{\mathbb{T}^N} W(\phi)\, dx&\geq W(m)+\frac12\frac{d^2 W}{d\phi^2}(m)\int_{\T}(\phi(x)-m)^2\, dx\\
\nonumber&\hspace{2cm}+\frac16\frac{d^3W}{d\phi^3}(m)\int_{\T}(\phi(x)-m)^3\, dx+\frac{w^2}{24} \int_{\T}(\phi(x)-m)^4\, dx.
\end{align}
}  Indeed,
since $\phi-m$ has vanishing average,
the Taylor expansion of $W$ in $m$ leads to
\begin{align*}
\int_{\mathbb{T}^N} \big(W(\phi) -W(m)\big)\, dx& =\frac12\frac{d^2 W}{d\phi^2}(m)\int_{\T}(\phi(x)-m)^2\, dx
+\frac16\frac{d^3W}{d\phi^3}(m)\int_{\T}(\phi(x)-m)^3\, dx\\
&\quad + \int_{\mathbb{T}^N} \int_0^1 \frac{(1-\ell)^3}6 \frac{d^4W}{d\phi^4}(m+\ell(\phi(x)-m))(\phi(x)-m)^4 \, d\ell dx;
\end{align*}
then \eqref{bound_pot} follows due to $\frac{d^4W}{d\phi^4}\geq w^2$.

\medskip

\nd {\it Step 4. Proof of 2.} If $\phi\in H^2_m(\T)$, we denote by $u=\phi-m$ of vanishing average.
Then Steps 1 and 3 yield
\begin{align}
\label{ineq_11}
&{\cal F}(\phi)-{\cal F}(m)\geq A+B+C \quad \textrm{ with }\\
\nonumber
&A=\frac12 \int_{\mathbb{T}^N} (\alpha u+\Delta u)^2+ \frac{d^2 W}{d \phi^2}(m) u^2\, dx, \,\, B=\frac16\frac{d^3W}{d\phi^3}(m)
\int_{\T}u^3\, dx, \, \, C=\frac{w^2}{24}\int_{\mathbb{T}^N}u^4 \, dx.
\end{align}
Note that by \eqref{interm1} and \eqref{cond_min_N}, we have that 
$$A\geq \frac14 \bigg(\frac{d^2 W}{d \phi^2}(m)+\min_{k\in 2\pi \Z^N, k\neq 0} (\alpha-|k|^2)^2\bigg) \int_{\T} u^2\, dx\geq 0.$$
We distinguish two cases:

$\bullet$ Case 1: $B=0$. By \eqref{ineq_11}, ${\cal F}(\phi)-{\cal F}(m)\geq A+C\geq 0$. In particular, we deduce that $\phi_*=m$ minimizes $\cal F$ over the set of functions  $\phi\in H_m^2(\mathbb{T}^N)$ with $\int_{\T}(\phi-m)^3\, dx=0$. Moreover, if $\phi$ is another minimizer in this class, then the above inequalities become equalities; in particular, $A=C=0$ yielding $u=0$, i.e., $\phi=m$ (because $w>0$). This yields the uniqueness of the minimizer $\phi_*=m$ over all functions $\phi\in H^2_m(\T)$ with $(\phi-m)^3$ of zero average.

$\bullet$ Case 2: $B\neq 0$. Then $\int_{\T}u^3\, dx\neq 0$ yielding by \eqref{cn} and \eqref{cond_optima}:
$$4AC\geq \frac{w^2P_N}{12}\left(\int_{\T}u^3\, dx\right)^2=3w^2 P_N B^2 \left(\frac{d^3W}{d\phi^3}(m)\right)^{-2}\geq B^2.$$
As $C>0$, it follows that $$A+B+C\geq \min_{t\in \R} \, (A+Bt+Ct^2)\geq 0.$$ We conclude by \eqref{ineq_11} that ${\cal F}(\phi)\geq {\cal F}(m)$ which implies 
that $\phi_*=m$ is a global minimizer of $\cal F$ over $H_m^2(\mathbb{T}^N)$.
Moreover, if the inequality in \eqref{cond_optima} is strict, we deduce that $4AC>B^2$, in particular, $A+B+C>0$; therefore ${\cal F}(\phi)> {\cal F}(m)$ yielding the uniqueness of the global minimizer.

 \medskip

\nd {\it Step 5. Proof of point 3.} By the assumptions at point 3. combined with Proposition \ref{prop}, we know that the infimum $P_N$ in \eqref{cn} is achieved by some function $u$ of zero average with $\int_{\T}u^3\, dx=1$. Within the notations at Step 4, we have for this minimizer $u$ in \eqref{cn}:
$$4AC=\frac{w^2P_N}{12}, \quad B^2=\frac1{36} \left(\frac{d^3W}{d\phi^3}(m)\right)^{2}.$$ As \eqref{cond_optima} fails to be true, i.e.,
$B^2>4AC$, there exists $t\in \R\setminus\{0\}$ such that $A+Bt+Ct^2<0$. Set $\phi=m+tu$. As $\frac{d^4 W}{d \phi^4}=w^2$ in $\R$, we have the equality in \eqref{ineq_11} and thus 
$${\cal F}(\phi)-{\cal F}(m)=t^2(A+Bt+Ct^2)<0,$$
which proves that $\phi_*=m$ is not a global minimizer of $\cal F$ over $H_m^2(\mathbb{T}^N)$.
\end{proof}

\begin{Remark}
\label{potPFC}
Let $W(\f)=\frac14 (\f^2-a)^2$ be the double-well potential used in the PFC model with $a>0$ and fix $\alpha=1$ (in particular, \eqref{cond:alpha} holds true).
Then we can apply Theorems~\ref{TheoremGammaLimit} and \ref{the:optimal} with
the conditions \eqref{cond_min} and \eqref{cond_optima} writing as 
\beq
\label{PFC_curve}
3m^2+(1-4\pi^2)^2\geq a \quad \textrm{and} \quad P_{N=2}\geq 2m^2,
\eeq
where $P_2$ depends on $\frac{d^2 W}{d \f^2}(m)=3m^2-a$. 
The above system determines the so-called order/disorder transition curve separating in the plane $(m, a)$ the region where the uniform state is optimal.
Note that the curve found numerically in \cite{EKHG} has the same aspect as the above parabola. In \cite{SCN},
the sufficient condition $a \leq m^2$ was found analytically which is a subregion in our result because by Proposition \ref{prop} we proved that 
$$P_2\geq    \frac{d^2 W}{d \phi^2}(m)+\min_{k\in 2\pi \Z^2, k\neq 0} (\alpha-|k|^2)^2=3m^2-a+(1-4\pi^2)^2\geq 2m^2,$$
whenever $a \leq m^2$. 
As our condition \eqref{PFC_curve} is necessary and sufficient,
we conclude that this is the exact region separating the regime of trivial minimizers from non-trivial ones.
\end{Remark}

\begin{Remark}
\label{rem:sym}
A challenging question is to determine the curve separating the parameter region where every global minimizer of $\cal F$ over $H_m^2(\mathbb{T}^N)$ is one-dimensional 
(that corresponds in particular to the region where stripes structures nucleate in the system, see e.g. \cite{EKHG, SCN}).
(This question is related to the well-known conjecture of De Giorgi for minimal surfaces.) Very few analytical results are available:
we mention in particular the result in \cite{BH} for the one-dimensional symmetry in the extended Fisher-Kolmogorov model in $\R^N$.
Also, the results in \cite{IgMo} for the one-dimensional symmetry in the Aviles-Giga type models in $\R^N$ (recall that in $2$-dimensions,
the standard Aviles-Giga model can be seen as a forth order problem in the stream function corresponding to the order parameter, see \cite{Ambrosio:1999, Aviles:1987, Aviles:1999, Jin:2000}).
\end{Remark}

\subsection{The case of the Ohta-Kawasaki model.}

\begin{proof}[Proof of Proposition \ref{prop2}]
In terms of the Fourier representation of a function $u \in H^1({\mathbb{T}^N})$ of zero average, i.e.,
$$
u(x)=\sum_{k\in 2\pi \Z^N, k\neq 0} \big(a_{k}\cos(k\cdot x))+b_{k}\sin(k\cdot x)\big), \quad x\in \mathbb{T}^N,
$$
where $a_k, b_k\in \R$ for  $k\in 2\pi \Z^N\setminus\{0\}$,
we write
\beq
\label{starOK}
\int_{\mathbb{T}^N}  \frac1{\gamma^2}|\nabla u|^2 + |\nabla(-\Delta)^{-1}u|^2 \,dx= \frac{1}{2}
\sum_{k\in 2\pi \Z^N\setminus\{0\}} \left(\frac{|k|^2}{\gamma^2} +\frac1{|k|^2}\right) (a_{k}^2+b_{k}^2).
\eeq
Therefore,
\begin{align*}
&\int_{\mathbb{T}^N}  \frac1{\gamma^2}|\nabla u|^2 + |\nabla(-\Delta)^{-1}u|^2 +\frac{d^2 W}{d\phi^2}(m)u^2 \,dx\\
&\quad \quad \geq  \left(\frac{d^2 W}{d\phi^2}(m)+ \min_{k\in 2\pi \Z^N\setminus\{0\}} \big(\frac{|k|^2}{\gamma^2} +\frac1{|k|^2}\big)\right) \int_{\mathbb{T}^N} u^2
\end{align*}
which is nonnegative thanks to \eqref{cond_OK}.  
The conclusion follows by the same argument as in the proof of Proposition \ref{prop}. The only difference consists in the fact that minimizing sequences in \eqref{qn} are bounded in $H^1$ (instead of $H^2$ as in the case of PFC model); therefore, we need the compact embedding $H^1(\T)\subset L^p(\T)$ for $p\in [1,3]$ provided that $N<6$ and one also uses the compact embedding $H^1(\T)\subset \dot{H}^{-1}(\T)$.
\end{proof}

\begin{proof}[Proof of Theorem \ref{the:optimal_OK}] We start by noting that $\phi_*=m$ is a critical point of the Ohta-Kawasaki functional $\cal E$ over $H_m^1(\mathbb{T}^N)$,
i.e., $\phi_*$ satisfies the Euler-Lagrange equation
$$
-\frac1{\gamma^2}\Delta \phi_*+ \psi_*+\frac{d W}{d \phi}(\phi_*)=\int_{\mathbb{T}^N} \frac{d W}{d \phi}(\phi_*)\, dx,
$$
where $\psi_*$ is the solution of \eqref{psi} associated to the critical point $\phi_*$ (obviously, $\psi_*=0$ if $\phi_*=m$).
The second variation of $\cal E$ at a critical point $\phi_*$ is given for every test configuration $u \in H^1(\mathbb{T}^N)$ with $\int_{\mathbb{T}^N} u \, dx =0$:
 \begin{align*}
 \nabla^2 {\cal E}(\phi_*)(u, u) &= \frac{d^2}{dt^2}\bigg|_{t=0}{\cal E}(\phi_*+tu)\\
 &= \int_{\mathbb{T}^N} \frac1{\gamma^2}|\nabla u|^2 + |\nabla(-\Delta)^{-1}u|^2 +\frac{d^2 W}{d \phi^2}(\phi_*)u^2\, dx.
 \end{align*}
By \eqref{starOK}, the conclusion of point 1. follows. For points 2. and 3., if $\phi \in H_m^1({\mathbb{T}^N})$, we write the Fourier representation
$$
\phi(x)=m + \sum_{k\in 2\pi \Z^N, k\neq 0} \big(a_{k}\cos(k\cdot x))+b_{k}\sin(k\cdot x)\big), \quad x\in \mathbb{T}^N,
$$
where $a_k, b_k\in \R$ for  $k\in 2\pi \Z^N\setminus\{0\}$. Denoting $u=\phi-m$ of vanishing average, by \eqref{bound_pot}, we obtain that
\begin{align*}
{\cal E}(\phi)-{\cal E}(m)&\geq \tilde A+B+C \quad \textrm{ with }\\
& \tilde A=\frac12 \int_{\mathbb{T}^N} \frac1{\gamma^2}|\nabla u|^2 + |\nabla(-\Delta)^{-1}u|^2+ \frac{d^2 W}{d \phi^2}(m) u^2\, dx,\end{align*}
$B$ and $C$ being the same as in \eqref{ineq_11}. 
The conclusion of points 2. and 3. follows by the same argument as in the proof of Theorem \ref{gen:optimal}.
\end{proof}

\paragraph{Acknowledgment.}  The authors thank Xavier Lamy for useful comments. R.I. acknowledges partial support by the ANR project ANR-14-CE25-0009-01.

\end{document}